\documentclass[11pt]{amsart}
\usepackage{graphicx}
\usepackage{latexsym}
\usepackage{amsfonts,amsmath,amssymb}
\newtheorem{theorem}{Theorem}[section]

\newtheorem{lemma}[theorem]{Lemma}




\def\ex{\textup{E\/}}

\def\eps{\varepsilon}
\def\la{\lambda}
\def\a{\alpha}
\def\be{\beta}

\def\ga{\gamma}
\def\part{\partial}

\usepackage{xcolor}

\newcommand{\beq}{\begin{equation}}
\newcommand{\eeq}{\end{equation}}

\theoremstyle{remark}

\numberwithin{equation}{section}
\linespread{1.1}
\date{\today}
\begin{document}

\title[Stable matchings]{On random stable matchings:  cyclic matchings with strict preferences and two-side matchings with partially ordered preferences.}

\author{Boris Pittel}
\address{Department of Mathematics, The Ohio State University, Columbus, Ohio 43210, USA}
\email{bgp@math.ohio-state.edu}

\keywords
{stable matching,  random preferences, asymptotics}

\subjclass[2010] {05C30, 05C80, 05C05, 34E05, 60C05}

\begin{abstract} Consider a cyclically ordered collection of $r$ equi-numerous agent sets with strict
preferences of every agent over the agents from the next agent set. A weakly stable cyclic matching is a partition of the set of agents into disjoint union of $r$-long
cycles, one agent from each set per cycle, such that there are no destabilizing $r$-long cycles, i.e. cycles in which every
agent strictly prefers its successor to its successor in the matching. Assuming that the preferences are uniformly random and 
independent, we show that the expected number of stable matchings grows with $n$ (cardinality of each agent set) as $(n\log n)^{r-1}$. 
We also consider a bipartite stable matching
problem where preference list of each agent 
 forms a partially ordered set. Each partial order
is an intersection of several, $k_i$ for side $i$, independent, uniformly random, strict orders. 
For $k_1+k_2>2$, the expected number of stable matchings is analyzed for three, progressively stronger, notions of stability.
The expected number of weakly stable matchings is shown to grow super-exponentially fast. In contrast, for $\min(k_1,k_2)>1$, the fraction of instances 
with at least one strongly stable (super-stable) matching is super-exponentially small.
\end{abstract}
\maketitle

\section{Introduction and main results} Consider the set of $n$ men and $n$ women facing a problem of selecting a marriage partner. A  marriage $M$ is a matching (bijection) between the two sets.  It is assumed that each man and each woman ranks all the members as a potential marriage partner, with no ties allowed.  A marriage is called stable if there is
no unmarried pair--a man and a woman--who prefer each other to their respective partners in the marriage. A classic theorem, due to Gale and Shapley \cite{GalSha}, asserts that, given any system of preferences, there exists at least one stable marriage $M$. 

The proof of this theorem is algorithmic.
A bijection is constructed in steps such that at each step every man not currently on hold makes
a proposal to his best choice among women who haven't rejected him before, and the chosen woman either provisionally puts the man on hold or rejects him, based on comparison of him to her  current suitor if she has one already. Since a woman who once gets proposed to always has a man on hold
afterwards, after finally many steps every woman has a suitor, and the resulting bijection turns out to be stable. Of course the roles can be reversed, with women proposing and each man selecting  between the current proponent and a woman whose proposal he currently holds, if there is such a woman.
In general, the two resulting matchings, $M_1$ and $M_2$ are different, one man-optimal, another woman-optimal. 

The interested reader is encouraged to consult Gusfield and Irving \cite{GusIrv} 
for
a rich, detailed analysis of the algebraic (lattice) structure of stable matchings set, and Manlove \cite{Man} for encyclopedic presentation of
a growing body of a contemporary research on a diverse  variety of matching problems. 

A decade after the Gale-Shapley paper, McVitie and Wilson \cite{McVWil} developed an alternative, sequential, algorithm in which proposals by one side to another are made one at a time. This procedure delivers the same
matching as the Gale-Shapley algorithm. 
This purely combinatorial, numbers-free, description  
begs for a probabilistic analysis of the problem chosen uniformly at random among all the instances, whose total number is $(n!)^{2n}$. In a pioneering paper \cite{Wil} Wilson reduced
the work of the sequential algorithm to a classic urn scheme (coupon-collector problem) and proved that the expected running time, whence the expected total rank of wives in the man-optimal
matching, is at most $nH_n\sim n\log n$, $H_n=\sum_{j=1}^n 1/j$.

A few years later, Knuth \cite{Knu}, among other results,  found a better upper bound $(n-1)H_n+1$, and established a matching lower bound $nH_n-O(\log^4n)$. He also posed a series of open problems, one of them
on the {\it expected\/} number of the stable matchings. Knuth pointed out that an answer might be found via his formula for the probability $P(n)$ that a generic matching $M$ is stable:
\begin{equation}\label{Pn=}
P(n)=\overbrace {\idotsint}^{2n}_{\bold x,\,\bold y\in [0,1]^n}\,\prod_{1\le i\neq j\le n}
(1-x_iy_j)\, d\bold x d\bold y.
\end{equation}
(His proof relied on an inclusion-exclusion formula, and an interpretation of each summand as the value of a $2n$-dimensional integral, with the integrand equal to the corresponding summand in the expansion of the integrand in \eqref{Pn=}.) The expected value of $S(n)$, the total number of stable matchings, would then be determined from $\ex[S(n)]=n! P(n)$. 

Following Don Knuth's suggestion, in \cite{Pit1} we used the equation \eqref{Pn=} to obtain an asymptotic formula 
\begin{equation}\label{Pnsim}
P(n)=(1+o(1))\frac{e^{-1}n\log n}{n!}\Longrightarrow \ex[S(n)]\sim e^{-1}n\log n.
\end{equation}
More generally, in \cite{Pit2} we derived a formula for $P_{k,\ell}(n)$, the probability that the generic matching $M$ is stable and $Q_M=k$, $R_M=\ell$; here $Q_M$ ($R_M$,  resp.) is the total
rank of wives as ranked by their husbands (the total rank of husbands as ranked by their wives, resp.) in $M$.

The key element of the proofs of the integral representations for these probabilities, which also imply
the Knuth formula \eqref{Pn=}, was a refined probability space. Its sample point is a pair of two
$n\times n$ matrices $\bold X=\{X_{i,j}\}$, $\bold Y=\{Y_{i,j}\}$ with all $2n^2$  entries being independent, $[0,1]$-uniform random variables. Reading each row of $\bold X$ and each column of $\bold Y$ in increasing order we recover the independent, uniform preferences of each of $n$
men and of each of $n$ women respectively. And, for instance, the integrand in \eqref{Pn=} turns out equal to the
probability that a generic matching $M$ is stable, {\it conditioned\/} on the values $x_i=X_{i,M(i)}$,
$y_j=Y_{M^{-1}(j),j}$

Using the formula for $P_{k,\ell}(n)$,
we proved a {\it law of hyperbola\/}: asymptotically almost surely (a.a.s.)
 $\max_{M}|n^{-3}Q_M R_M-1|\le n^{-1/4+o(1)}$. 
 It followed, in particular, that a.a.s.  $S(n)\ge n^{1/2-o(1)}$, a significant  improvement of  the logarithmic bound in Knuth, Motwani and Pittel \cite{KnuMotPit}. Thus, for a large number of participants,  a typical instance of the preferences has multiple stable
matchings, very nearly obeying the preservation law for the product of the total man-rank and
the total woman-rank. In a way this law is not unlike thermodynamic laws in physics of gases. However those laws are usually of phenomenological nature, while the product law is a rigorous
corollary of the {\it local\/} stability conditions for the random instance of the preferences.

Later Lennon and Pittel \cite{LenPit} extended the techniques in \cite{Pit1}, \cite{Pit2} to
show that $\ex[S(n)^2]\sim (e^{-2}+0.5e^{-3})(n\log n)^2$. Combined with \eqref{Pnsim}, this
result implied that $S(n)$ is of order $n\log n$ with probability $0.84$, at least. Jointly with Shepp
and Veklerov \cite{Pit3} we proved that, for a fixed $k$, the expected number of women with $k$ stable husbands is asymptotic to $(\log n)^{k+1}/$ $(k-1)!$. In a recent ground-breaking paper Ashlagi, Kanoria and Leshno \cite{Ash} analyzed a random stable matching problem with unequal
sizes $n_1$ and $n_2$ of the two sides. They discovered that  the set  of properly scaled total ranks $\{(Q_M,R_M)\}_{M}$ a.a.s. converges to a single point even if $|n_2-n_1|=1$. We showed \cite{Pit4} that  if $n_2\gg n_2-n_1>0$ then  the expected number of stable matchings is asymptotic to 
$e^{-1}n_1/[(n_2-n_1)\log n_1]$, compared to $e^{-1}n_1\log n_1$ for $n_2=n_1$, \cite{Pit1}.


Among other avenues of research, Knuth \cite{Knu} was interested in  whether the stable matching problem can be fruitfully generalized to, say, three 
equinumerous sets of agents, referred to as men, women and dogs. The goal is to partition the sets of agents into triples,
(a man, a woman, a dog), such that [given the agents preferences] the set of triples is stable in some sense. 

As reported in Ng and Hirschberg \cite{NgH}, Knuth was particularly interested in the case of cyclic preferences over individual agents: each of $n$ men ranks $n$ women, and women only; each
of $n$ women ranks $n$ dogs, and dogs only; each of $n$ dogs ranks $n$ men, and men only. We will use the notation ``$w_1\overset m\prec w_2$'' to indicate that a man $m$ prefers woman $w_1$ to $w_2$, and will indicate
similarly the preferences of men and dogs.

Denoting the three sets $\mathcal M$, $\mathcal W$ and $\mathcal D$, a  matching $M$ is a partition of $\mathcal M\cup\mathcal W\cup\mathcal D$ into $n$ ordered triples $(m, w, d)$,
each triple being viewed as a directed $3$-cycle $m\to w\to d\to m$.  So $M$ is a permutation of $\mathcal M\cup\mathcal W\cup\mathcal D$
with cycles of length $3$, each cycle of the same type $m\to w\to d\to m$. For each agent $a$, let $M(a)$ denote the successor of $a$ in the cycle of $M$ that
contains $a$. A directed $t=(m\to w\to d\to m)$ strongly blocks $M$ if  $w\overset m\prec M(m),\, d\overset w\prec M(w),\, m\overset d\prec M(d)$.
A matching $M$ is
called weakly stable if no cycle $t=(m\to w\to d\to m)$ strongly blocks $M$. Eriksson et al. \cite{Eri} and Escamocher and O'Sullivan \cite{Esc} conjectured that a weakly stable matching exists for every instance of cyclic preferences. Recently Lam and Paxton \cite{Lam} found an instance of cyclic preferences that has no weakly stable matching.

Similarly, assuming strict preferences, $t=(m\to w\to d\to m)$ weakly blocks $M$ if at least two agents $a_1$ and $a_2$ from $t$ prefer their successors in $t$ to their successors in $M$, and the successor of $a_3$ in $t$ is $M(a_3)$. $M$ is called strongly stable if no $t$ weakly blocks $M$. 

In this paper we consider the stable matchings under cyclic preferences with $r\ge 3$ sides. Here we have an ordered sequence of $r$ equinumerous agent sets $\mathcal A_1,\dots, \mathcal A_r$;  each $a\in \mathcal A_s$ ranks all agents $a'\in \mathcal A_{s+1}$, $(\mathcal A_{r+1}:=\mathcal A_1)$. A matching $M$ is a partition of $\mathcal A_1\cup\cdots\cup \mathcal A_r$ into disjoint directed cycles $a_1\to\cdots\to a_r\to a_1$, $(a_s \in \mathcal A_s)$. 

$M$ is called weakly stable (w-stable) if there is no tuple $a_1,\dots, a_r$ of agents from the
sets $\mathcal A_1,\dots,A_r$ such that 
\[
a_2\overset {a_1}\prec M(a_1),\, a_3\overset {a_2}\prec M(a_2),\cdots, a_1\overset {a_r}\prec M(a_r),
\]
where $M(a)$ is the successor of $a$ in the cycle from the matching $M$ that contains $a$.

Given $m\ge \lceil r/2\rceil$ (the least integer strictly exceeding $r/2$), we say that a cyclic tuple of agents $a_1\to\cdots\to a_r\to a_1$ weakly blocks
$M$ if at least $m$ agents $a_i$ (strictly) prefer their successors in the tuple to their successors in $M$. We call $M$ strongly stable if no cyclic tuple
weakly blocks $M$.

We will prove
\begin{theorem}\label{newE(Sn)>} Let $S_{n,r}$ denote the total number of weakly stable matchings. For $r\ge 3$, we have
\[
\Bbb E [S_{n,r}]\gtrsim (c_r+o(1))\,\left(\frac{n\log n}{2}\right)^{r-1},\quad c_r:= \Bbb P(T_{r-1}\in [1,2]),
\]
where $T_{r-1}=\sum_{j\in [r-1]}Y_j$, and $Y_j$ are independent $[0,1]$-uniform random variables.
\end{theorem}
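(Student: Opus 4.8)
The plan is to compute $\Bbb E[S_{n,r}]$ as (number of matchings) times (probability that a fixed matching $M$ is weakly stable), exactly as in the classical $r=2$ case. By symmetry we may condition on $M$ being the ``identity'' matching, i.e.\ the cycles are $(a_1^{(k)}\to a_2^{(k)}\to\cdots\to a_r^{(k)}\to a_1^{(k)})$ for $k\in[n]$. The number of such matchings is $(n!)^{r-1}$. So the crux is to show
\[
P_{n,r}:=\Bbb P(M \text{ is w-stable}) \gtrsim (c_r+o(1))\,\frac{(n\log n)^{r-1}}{2^{r-1}(n!)^{r-1}}.
\]
To get a handle on $P_{n,r}$ I would introduce, as in the refined probability space recalled in the excerpt, independent $[0,1]$-uniform matrices $\bold X^{(1)},\dots,\bold X^{(r)}$, where $X^{(s)}_{i,j}$ encodes how much agent $i\in\mathcal A_s$ ranks agent $j\in\mathcal A_{s+1}$ (smaller $=$ more preferred). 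Condition on the ``matched values'' $x_s:=X^{(s)}_{k,k}$ for the cycle through index $k$. A destabilizing tuple through a fixed choice of indices $(j_1,\dots,j_r)$ (with $j_s\in\mathcal A_s$, not all equal to a common $k$, and in general spanning several cycles) is avoided with conditional probability $\prod_{s}\bigl(1-\text{(prob.\ agent $j_s$ prefers $j_{s+1}$ to its $M$-partner)}\bigr)$; since preferences in row $j_s$ of $\bold X^{(s)}$ are independent uniforms, the probability that $j_s$ prefers $j_{s+1}$ to its match is exactly $x_s^{(j_s)}$, the matched value in $j_s$'s own row. Multiplying over all tuples and integrating over the matched values $\{x^{(k)}_s\}$ yields an $rn$-dimensional integral generalizing \eqref{Pn=}; schematically, after the integration factorizes over the $n$ cycles only up to the coupling across cycles, one is led to a ``mean-field'' integral of the form
\[
P_{n,r}=\idotsint \prod \bigl(1-\text{products of }x\text{'s}\bigr)\,\prod dx,
\]
whose leading asymptotics is extracted by the Laplace-type analysis already developed in \cite{Pit1,Pit2}.

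The technical heart is the asymptotic evaluation of that integral. Following the $r=2$ blueprint: rescale the matched values by setting $x^{(k)}_s = u^{(k)}_s/(n\log n)^{1/?}$ — more precisely, the dominant contribution comes from the region where the product of the $r$ matched values along a cycle is of order $1/n$, so one substitutes and tracks which one-dimensional slices survive. The combinatorial factor $(n!)^{r-1}$ will cancel against $\prod 1/n!$ coming from the normalization, leaving a clean power of $n\log n$. The constant $c_r=\Bbb P(T_{r-1}\in[1,2])$ should emerge as follows: after taking logarithms, $\log x^{(k)}_1+\cdots+\log x^{(k)}_r$ governs the cycle, and in the scaling window the negatives of $r-1$ of the rescaled log-variables behave like independent $[0,1]$-uniforms $Y_1,\dots,Y_{r-1}$ (the $r$-th being pinned by the constraint), so the cycle contributes a nonnegligible amount precisely when $\sum_{j\in[r-1]}Y_j=T_{r-1}$ lands in the interval $[1,2]$ — the interval coming from the two-sided squeeze $1\le$ (product of matched values)$\cdot n\le$ (something) $\le n^{\,?}$ in the effective integrand. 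The factor $2^{-(r-1)}$ is the Jacobian/normalization of splitting $n\log n$ symmetrically among the $r-1$ free coordinates.

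I would organize the proof in steps: (1) reduce to the fixed matching and write $P_{n,r}$ as the $rn$-fold integral; (2) prove a lower bound by restricting the integration domain to a convenient box (this is why the theorem is stated with ``$\gtrsim$'' rather than ``$\sim$'' — we only need a lower bound, so we may discard inclusion–exclusion error terms and shrink the domain, which is far easier than the matching upper bound); (3) on that box, replace $\prod(1-\cdots)$ by $\exp(-\sum\cdots + O(\text{error}))$ and show the error is negligible; (4) perform the change of variables $x^{(k)}_s\mapsto$ log-scale, identify the surviving region as $\{T_{r-1}\in[1,2]\}$ up to $o(1)$, and read off the constant. The main obstacle I anticipate is step (3)–(4): controlling the interaction \emph{across} the $n$ cycles — in the $r=2$ case this is the passage from $\prod_{i\ne j}(1-x_iy_j)$ to a product over $i$ of $\prod_j(1-x_iy_j)\approx\exp(-x_i\sum_j y_j)$ and then the concentration of $\sum_j y_j$ around $n/2$; for general $r$ the analogous sums form a longer chain of dependencies, and showing that each $\sum_{j}$ over a given side concentrates (so that the cycles effectively decouple into i.i.d.\ contributions producing $T_{r-1}$) is the delicate estimate. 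Everything else is bookkeeping parallel to \cite{Pit1}.
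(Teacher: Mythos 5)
Your skeleton (fix the diagonal matching, refine the probability space with uniform matrices $X^{(s)}_{i,j}$, condition on the matched values, pass to a product over destabilizing tuples, then run a Laplace-type analysis) matches the paper's, but there is a genuine gap at the first substantive step. You multiply the avoidance probabilities over all cyclic tuples as if the events were independent, writing $P_{n,r}=\idotsint \prod\bigl(1-\cdots\bigr)\prod dx$. For $r=2$ this is Knuth's exact identity \eqref{Pn=}, proved by inclusion--exclusion; for $r\ge 3$ no such identity is available, and the events $\mathcal E(\bold i)$ for tuples sharing an edge are genuinely dependent. The paper's Lemma \ref{P(M)} supplies exactly what is needed: conditionally on the matched values, each $\mathcal E(\bold i)$ is a monotone increasing event in the independent off-diagonal uniforms, so by positive association (FKG) the probability of the intersection is at least the product of the probabilities, which yields the integral as a \emph{lower bound} only. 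Your remark that ``we may discard inclusion--exclusion error terms'' because only a lower bound is needed does not substitute for this: truncating an inclusion--exclusion expansion does not produce the product $\prod(1-\cdots)$, and without the positive-association step there is no integral to analyze.

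Second, your heuristic for extracting the asymptotics points at the wrong mechanism. You expect the side sums (the analogues of $\sum_j y_j$) to concentrate so that the cycles decouple into i.i.d.\ contributions producing $T_{r-1}$. In fact the $(\log n)^{r-1}$ and the constant $c_r$ arise precisely because these sums do \emph{not} concentrate: after substituting $\xi^{(s)}=\sum_i x_i^{(s)}$ and $u_i^{(s)}=x_i^{(s)}/\xi^{(s)}$ the integral factors; the one-dimensional integral in $\sigma=\bigl(\prod_s\xi^{(s)}\bigr)^{1/r}$ is evaluated by a Gaussian approximation around $\sigma^*=(n-1)^{1/r}$ and contributes $(1+o(1))(n-1)!$ --- this is where $r\ge 3$ is essential, since $\phi''(\sigma^*)=-\Theta\bigl(n^{(r-2)/r}\bigr)\to-\infty$ fails for $r=2$, a point your proposal never identifies; and the remaining $r-1$ variables are integrated against $d\xi^{(s)}/\xi^{(s)}$ over the window $[n^{\a},n^{\be}]$, each contributing a factor $\be\log n\to\tfrac12\log n$, with the saddle-point admissibility condition $n^{1-\be}\lesssim\prod_{s\le r-1}\xi^{(s)}\lesssim n^{1-\a}$ turning, in the variables $\log\xi^{(s)}/(\be\log n)$, into exactly the event $T_{r-1}\in[1,2]$. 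Your guesses about the origin of $[1,2]$ and of $2^{-(r-1)}$ are in the right spirit, but the concentration picture you propose would not produce them, and the cross-cycle ``decoupling'' you flag as the delicate step is not in fact how the argument runs.
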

\noindent {\bf Note.\/} {\bf (1)\/}  Notice that $c_r >0$ for $r\ge 3$, but, formally, $c_2=0$. As we mentioned, we had proved that $\ex[S_{n,2}]\sim e^{-1}n\log n$, \cite{Pit1}, and also that w.h.p. $S_{n,2}\ge n^{1/2-o(1)}$. Theorem  \ref{newE(Sn)>} emboldens us to conjecture that, for $r>2$, w.h.p. $S_{n,r}\ge
n^{\ga_r}$, where $\ga_r\to\infty$ as $r$ grows. {\bf (2)\/} We worked out a lower bound for the expected number of
strongly stable matchings as well:  the bound fast goes to zero as $n\to\infty$. We conjecture that the fraction of instances with at least one strongly stable matching is vanishing as $n\to\infty$.\\

Gusfield and Irving \cite{GusIrv} and Irving \cite{Irv} introduced and studied a more general stable marriage problem, with two sides of size $n$ each, when the preference
lists are partially ordered, i.e. tied entries in the lists are allowed. They defined three, progressively weaker, notions of a stable
matching. {\bf (1)\/} A matching $M$ is super-stable if there is no unmatched (man, woman) pair such that the man and the woman like each other at least as
much as his/her partner in $M$. {\bf (2)\/} 
 $M$ is strongly stable if there is no unmatched (man, woman) pair such that (a) either the man strictly prefers the woman to his partner in $M$ or the woman and his partner are tied in his list and (b) either the woman strictly prefers the man to her partner in $M$
or the man and her partner in $M$ are tied in her list.  {\bf (3)\/} $M$ is weakly stable if there is no unmatched (man, woman) pair such that they strictly prefer each other to their partners under $M$. It was demonstrated in \cite{GusIrv} that a properly extended Gale-Shapley algorithm allows to determine the existence of a super-stable or strongly stable matching for any given instance of partially ordered preferences. As for weak stability, a stable matching can be found by, first, extending each of the partial orders to a linear order, which can always be done, and in multiple ways, and, second, applying the Gale-Shapley algorithm.

The ground-breaking work in \cite{GusIrv} and \cite{Irv} stimulated an impressive research on stable matchings with partial information
about the preference lists, see for instance  Rastagari et al. \cite{Ras1}, Rastagari et al. \cite{Ras2}, and Gelain et al. \cite{Gel}. What the author learned 
about the contemporary state of research in stable matchings under preferences came from reading the book by Manlove \cite{Man}, a remarkably systematic, thought-provoking, expert survey of highly diverse models and algorithms of stable matchings.
  
It occurred to us that as a natural,
more
general, version of the sequence of $n$ independent $[0,1]$-Uniforms, inducing the uniform linearly ordered preference list, one can consider 
the sequence of $n$ independent $[0,1]^k$-Uniforms, i.e. a sequence of $n$ points $\{\bold Z^{(j)}\}_{j\in [n]}$ chosen uniformly, and independently,
from the $k$-cube $[0,1]^k$. Intersecting the $k$ coordinate linear orders on  $\{\bold Z^{(j)}\}_{j\in [n]}$ we obtain $\mathcal P_k(n)$, a partial order on those $n$ points, with order dimension $k$ at most. 
As Brightwell indicated in his authoritative survey \cite{Bri},  the random $k$-dimensional partial orders have been a subject of systematic studies since a $1985$ paper of Winkler \cite{Win1}, see also Winkler \cite{Win2}, and ``had in fact occurred in various different guises earlier''. 

So in this paper we consider the case when the partially ordered preferences of $n$ men (women resp.) over women (men resp.) are $n$ independent copies of the random partial order $\mathcal P_{k_1}(n)$ ($\mathcal P_{k_2}(n)$ resp.). Equivalently, we have the $n$ sequences $\{\bold X_i^{(j)}\}_{j\in [n]}$, $(i\in [n])$, and the $n$ sequences $\{\bold Y_j^{(i)}\}_{i\in [n]}$, $(j\in [n])$, with all $\bold X_i^{(j)}$ and $\bold Y_j^{(i)}$ being independent,
$[0,1]^{k_1}$-Uniforms and $[0,1]^{k_2}$-Uniforms respectively, so that, for instance, the partial order induced by $\{\bold X_i^{(j)}\}_{j\in [n]}$ is the partially ordered
preference list of man $i$ over the set of women $j\in [n]$.

Let $S_{n,w}$, $S_{n, s}$ and $S_{n,\text{sup}}$ stand for the random number of weakly,  strongly and super-stable matchings respectively. Obviously
$S_{n,w} \ge S_{n, s}\ge S_{n,\text{sup}}$. Also $S_{n,w}$ ($S_{n,\text{sup}}$ resp.) is stochastically monotone increasing (decreasing resp.)
with respect to $(k_1,k_2)$. We will prove two claims.
\begin{theorem}\label{weak} Let $k:=\max(k_1,k_2)>1$. Then
\[
(\log n)^{(k-1-o(1))n} \le \Bbb E[S_{n,w}]\le (\log n)^{(k_1+k_2-1+o(1))n}.
\]
\end{theorem}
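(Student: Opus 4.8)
The plan is to run the now-standard reduction to a Knuth-type integral, with the $[0,1]^{k_i}$-uniform points replacing the $[0,1]$-uniforms. By symmetry $\mathbb{E}[S_{n,w}]=n!\,P_n$, where $P_n=\mathbb{P}(M_0\text{ weakly stable})$ for the identity matching $M_0(i)=i$. Condition on the diagonal points $\mathbf x_i=\mathbf X_i^{(i)}$, $\mathbf y_j=\mathbf Y_j^{(j)}$. For $i\ne j$, the pair $(i,j)$ blocks $M_0$ precisely when $\mathbf X_i^{(j)}$ lies coordinatewise below $\mathbf x_i$ and $\mathbf Y_j^{(i)}$ lies coordinatewise below $\mathbf y_j$; over all ordered pairs $i\ne j$ these events depend on pairwise disjoint families of off-diagonal entries, hence are conditionally independent, and the $(i,j)$ one has conditional probability $\pi(\mathbf x_i)\pi(\mathbf y_j)$, where $\pi(\mathbf z)=\prod_s z_s$. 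Since a product of $k$ independent uniforms has density $f_k(t)=(-\log t)^{k-1}/(k-1)!$ on $(0,1)$, setting $U_i=\pi(\mathbf x_i)$, $V_j=\pi(\mathbf y_j)$ we get
\[
\mathbb{E}[S_{n,w}]=n!\;\mathbb{E}\Bigl[\prod_{1\le i\ne j\le n}(1-U_iV_j)\Bigr],
\]
with $U_1,\dots,U_n$ i.i.d.\ of density $f_{k_1}$, $V_1,\dots,V_n$ i.i.d.\ of density $f_{k_2}$, all independent. Everything afterwards is estimating this expectation; note that any fixed $C^{\,n}$ equals $(\log n)^{o(n)}$, so the exponents of $\log n$ in the theorem are insensitive to such factors.

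For the lower bound assume $k=k_1\ge k_2$ and fix a constant $a>0$. Restricting the expectation to $\{U_i\le a/n\text{ for all }i\}$ and leaving the $V_j$ free, on that event $\prod_{i\ne j}(1-U_iV_j)\ge(1-a/n)^{n(n-1)}=\exp(-an+O(1))$, while $\mathbb{P}(U_1\le a/n)=\int_0^{a/n}f_{k_1}=(1+o(1))\,(a/n)\,(\log n)^{k_1-1}/(k_1-1)!$, the integral being dominated by its upper endpoint where $-\log t\sim\log n$. Hence, using $n!=(1+o(1))^n(n/e)^n$,
\[
\mathbb{E}[S_{n,w}]\ge\Bigl(\frac{a\,e^{-1-a}}{(k_1-1)!}\Bigr)^{\!n}(\log n)^{(k_1-1)n}(1+o(1))^n=(\log n)^{(k-1-o(1))n}.
\]
(When $\min(k_1,k_2)>1$ one may restrict instead $U_i\le a/\sqrt n$ and $V_j\le b/\sqrt n$, which improves the exponent to $k_1+k_2-2$; but the displayed bound already matches the theorem.)

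For the upper bound, $1-x\le e^{-x}$ gives $\mathbb{E}[\prod_{i\ne j}(1-U_iV_j)]\le\mathbb{E}[\exp(-\sum_{i\ne j}U_iV_j)]$. Put $S_U=\sum_iU_i$, $S_V=\sum_jV_j$ (independent). Since $\sum_{i\ne j}U_iV_j=\sum_iU_i(S_V-V_i)$ and $S_V-V_i\ge S_V-1\ge S_V/2$ whenever $S_V\ge 2$ (and symmetrically in $U$), on $\{\max(S_U,S_V)\ge 2\}$ we have $\sum_{i\ne j}U_iV_j\ge S_US_V/2$, while on the complement the exponent is nonnegative; therefore
\[
\mathbb{E}\Bigl[\prod_{i\ne j}(1-U_iV_j)\Bigr]\le\mathbb{E}\bigl[e^{-S_US_V/2}\bigr]+\mathbb{P}(S_U<2)\,\mathbb{P}(S_V<2).
\]
For the second summand a Chernoff bound with $\mathbb{E}[e^{-\lambda U}]\le C_{k_1}(1+\log(1+\lambda))^{k_1-1}/(1+\lambda)$ (valid for $U$ a product of $k_1$ uniforms) and $\lambda=n$ gives $\mathbb{P}(S_U<2)\le C^{\,n}(\log n)^{(k_1-1)n}/n!$, and likewise for $S_V$; thus $n!\,\mathbb{P}(S_U<2)\mathbb{P}(S_V<2)=O(C/n)^n\to 0$. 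For the first summand, write $\mathbb{E}[e^{-S_US_V/2}]=\int_0^\infty\!g_{k_2}(\tau)\,\mathbb{E}[e^{-\tau U/2}]^n\,d\tau$ with $g_{k_2}$ the density of $S_V$, and use (i) the left-tail bound $\mathbb{P}(S_V\le\tau)\le C^{\,n}\tau^n(1+\log(n/\tau))^{(k_2-1)n}/n!$ (Chernoff with $\lambda\asymp n/\tau$) and (ii) $\mathbb{E}[e^{-\tau U/2}]\le C_{k_1}(1+\log(1+\tau))^{k_1-1}/(1+\tau/2)$, of order $(\log\tau)^{k_1-1}/\tau$ for large $\tau$. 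Peeling off a bounded range of $\tau$ (as for the $\{S_U,S_V<2\}$ term) and integrating by parts against the bound in (i), the remaining integral is controlled by $\int\tau^{-1}(\log\tau)^{(k_1-1)n}(\log(n/\tau))^{(k_2-1)n}\,d\tau$; the substitution $\tau=n^{\beta}$ turns this into $(\log n)^{(k_1+k_2-2)n+1}\,B\bigl((k_1-1)n+1,(k_2-1)n+1\bigr)$, and after multiplying by $n!$ and absorbing the Stirling and $C^{\,n}=(\log n)^{o(n)}$ factors one gets $n!\,\mathbb{E}[e^{-S_US_V/2}]\le(\log n)^{(k_1+k_2-1+o(1))n}$ (in fact $(\log n)^{(k_1+k_2-2+o(1))n}$), which proves the theorem. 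The main obstacle is establishing the one-variable estimates (i) and (ii) — pinning down the $(-\log\cdot)^{k-1}/(k-1)!$ shape uniformly over the range of the parameter — together with the $C^{\,n}$ bookkeeping; these are the technical heart, the combinatorial reduction in the first step being routine.
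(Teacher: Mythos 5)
Your reduction to $\Bbb E[S_{n,w}]=n!\,\Bbb E\bigl[\prod_{i\neq j}(1-U_iV_j)\bigr]$ with $U_i,V_j$ distributed as products of $k_1$, $k_2$ independent Uniforms is exactly the paper's starting point (equations \eqref{w-stable}, \eqref{ESnstab=}, \eqref{XY}), but both of your bounds then proceed differently, and both are correct. For the lower bound the paper restricts to $\sum_i x_i\le 1$, invokes log-convexity of $\phi_{k_1}$ on $(0,e^{-1}]$ to replace $\prod_i\phi_{k_1}(x_i)$ by $\phi_{k_1}(s/n)^n$, and uses the exact density $s^{n-1}/(n-1)!$ of the sum; your restriction to $\{\max_i U_i\le a/n\}$ with the trivial pointwise bound $(1-a/n)^{n(n-1)}$ is more elementary, needs only the one-variable tail $\Bbb P(U\le a/n)\sim (a/n)(\log n)^{k_1-1}/(k_1-1)!$, and lands on the same exponent $k-1$ since the $C^n$ prefactor is absorbed into $(\log n)^{o(n)}$. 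For the upper bound the paper corrects $s_j\to s$ via the bounded logarithmic derivative of the Laplace transform and then uses its log-convexity to collapse everything into $e^n\bigl(\int_0^1 e^{-nz}\phi_{k_1+k_2}(z)\,dz\bigr)^n$; you instead bound $\sum_{i\neq j}U_iV_j\ge S_US_V/2$ off an exceptional event and run a one-dimensional Chernoff/Laplace analysis over the law of $S_V$. Your estimates (i) and (ii) are indeed the standard asymptotics $\Bbb E[e^{-\lambda U}]\asymp \lambda^{-1}(\log\lambda)^{k_1-1}$ and the induced left-tail bound, and the $\tau=n^{\beta}$ substitution is carried out correctly, so once those are written out the argument closes; what it buys is notable: you obtain exponent $k_1+k_2-2$ rather than the paper's $k_1+k_2-1$, which is consistent with the known $k_1=k_2=1$ asymptotics $e^{-1}n\log n$ and, in the case $\min(k_1,k_2)=1$, would make the upper bound match the lower bound $(\log n)^{(k-1-o(1))n}$ — a genuine sharpening of the theorem as stated.
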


\begin{theorem}\label{strong/super} {\bf (i)\/} If $k:=\min(k_1,k_2)>2$, then 
\[
\Bbb E[S_{n,s}] \le n^{-n\bigl(\frac{k-1}{k+1}-o(1)\bigr)},\quad \Bbb E[S_{n,\text{sup}}] \le n^{-n\bigl(k-1-o(1)\bigr)}.
\]
{\bf (ii)\/} If $\min(k_1,k_2)=1$ and  $k:=\max(k_1,k_2)>1$, then
\begin{align*}
&\,[e^2(k-1)!]^{-n} \le \Bbb E[S_{n,s}] = \Bbb E[S_{n,\text{sup}}] \le (\rho_k+o(1))^n,\\
&\rho_k:=e^{-1}\int_0^1e^{z/2} z^{-1}\frac{\log^{k-1}(1-z)^{-1}}{(k-1)!}\,dz\le \rho_2<0.83.
\end{align*}
\end{theorem}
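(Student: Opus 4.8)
The plan is to pass to Knuth‑type integral representations and then estimate them by large‑deviation and saddle‑point methods. The common first step: by symmetry each of $\E[S_{n,\text{sup}}]$ and $\E[S_{n,s}]$ equals $n!$ times the probability that the identity matching $M_0$ is super‑ (resp.\ strongly) stable, and, exactly as in \cite{Pit1,Pit2}, conditioning on the ``diagonal'' points $\bold X_i^{(i)},\bold Y_j^{(j)}$ (each agent's evaluation of its $M_0$‑partner) makes the $n(n-1)$ events ``the unmatched pair $(i,j)$ blocks'' conditionally independent. Writing, for independent $\bold X_i\sim U[0,1]^{k_1}$ and $\bold Y_j\sim U[0,1]^{k_2}$,
\[
u_i=1-\prod_{\ell\le k_1}(1-X_{i,\ell}),\qquad p_i=\prod_{\ell\le k_1}X_{i,\ell}\ \ (\le u_i),
\]
and letting $v_j,q_j$ be the corresponding quantities built from $\bold Y_j$ with $k_2$ in place of $k_1$, a one‑line computation of each conditional blocking probability gives
\[
\E[S_{n,\text{sup}}]=n!\,\E\Big[\prod_{i\ne j}(1-u_iv_j)\Big],\qquad \E[S_{n,s}]=n!\,\E\Big[\prod_{i\ne j}\big(1-p_iv_j-u_iq_j+p_iq_j\big)\Big].
\]
Here $u_i$ has density $f_{k_1}$ with $f_k(x)\sim x^{k-1}/(k-1)!$ near $0$, while $p_i$ has a density with a $(\log\frac1x)^{k_1-1}$ singularity at $0$; this contrast governs everything below. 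When $\min(k_1,k_2)=1$, say $k_1=1$, then $p_i=u_i$ and the second product collapses to the first, so $S_{n,s}\equiv S_{n,\text{sup}}$, which yields the identity in (ii).

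For part (i), super‑stable case: by the stated stochastic monotonicity $\E[S_{n,\text{sup}}(k_1,k_2)]\le\E[S_{n,\text{sup}}(k,k)]$, so I may assume $k_1=k_2=k>2$. Bound $\prod_{i\ne j}(1-u_iv_j)\le\exp(-\sum_{i\ne j}u_iv_j)$ and take the expectation over the $v$'s conditionally on the $u$'s: with $U=\sum_i u_i$ and $\psi_k(s)=\E[e^{-sv}]$, $v\sim f_k$,
\[
\E\Big[\prod_{i\ne j}(1-u_iv_j)\Big]\le\E_u\Big[\prod_j\psi_k(U-u_j)\Big].
\]
Then use $\psi_k(s)\le A_k(1+s)^{-k}$ (a consequence of $f_k(v)\ll v^{k-1}$, proved by splitting $\int_0^1 f_k(v)e^{-sv}\,dv$ at $v=\tfrac12$) and $1+U-u_j\ge\max(1,U)$ to obtain $\prod_j\psi_k(U-u_j)\le A_k^n\min(1,U^{-kn})$, hence $\E[S_{n,\text{sup}}]\le n!\,A_k^n\,\E[\min(1,U^{-kn})]$. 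The crux is the sharp Chernoff estimate $\mathbb{P}(U\le\sigma)\le(C_k\sigma/n)^{kn}$ for $\sigma$ up to a fixed fraction of $\E U$ — again coming from $\psi_k(\lambda)\asymp\lambda^{-k}$ — which yields $\E[\min(1,U^{-kn})]\le n^{-kn}e^{O(n)}$ and therefore $\E[S_{n,\text{sup}}]\le n^{-n(k-1-o(1))}$. One cannot instead bound $\prod\le 1$ on $\{U\le n/\log n\}$: that set has probability only $e^{-\Theta(n\log\log n)}$, which the factor $n!$ overwhelms, so the sharp small‑deviation bound for $U$ is unavoidable. For the strong‑stable bound, run the same argument on $\prod_{i\ne j}(1-p_iv_j-u_iq_j+p_iq_j)$, using $p_iv_j+u_iq_j-p_iq_j\ge\tfrac12(p_iv_j+u_iq_j)$, taking expectation over the (dependent) pairs $(q_j,v_j)$, and then controlling the joint small deviations of $(\sum p_i,\sum u_i)$; the extra latitude coming from the heavy near‑$0$ density of $\sum p_i$ is what degrades the exponent to $\tfrac{k-1}{k+1}$.

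For part (ii) take $k_1=1<k_2=k$, so $\E[S_{n,s}]=\E[S_{n,\text{sup}}]=n!\,\E_v\big[\prod_i\int_0^1\prod_{j\ne i}(1-uv_j)\,du\big]$. Lower bound: restrict each inner integral to $[0,c/n]$ and the outer expectation to $\{\max_i u_i\le c/n\}$; there $\prod_{i\ne j}(1-u_iv_j)\ge(1-c/n)^{n(n-1)}\ge e^{-cn(1+o(1))}$ (using only $v_j\le 1$) while $\mathbb{P}(\max_i u_i\le c/n)=(c/n)^n$ (the $u_i$ are uniform), so $\E[S_{n,s}]\ge n!\,(c/n)^n e^{-cn(1+o(1))}$; optimizing at $c=1$ gives $\E[S_{n,s}]\ge e^{-2n(1+o(1))}\ge[e^2(k-1)!]^{-n}$. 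Upper bound: here I would follow the asymptotic machinery of \cite{Pit1}. The inner integral concentrates on $u=\Theta(1/n)$, where $\prod_{j\ne i}(1-uv_j)\approx\exp\!\big(-u\sum_{j\ne i}v_j\big)$; after rescaling $u\mapsto t/n$, the product over $i$ combines with $n!$ and the (asymptotically Gaussian) empirical law of $\{v_j\}$ to give, by Laplace's method, the exponential rate $\rho_k$, the Gaussian fluctuations of $\sum_j v_j$ being responsible for the factor $e^{z/2}$ in $\rho_k=e^{-1}\int_0^1 e^{z/2}z^{-1}f_k(z)\,dz$; the inequality $\rho_k\le\rho_2<0.83$ is then an elementary estimate of this integral. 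Carrying out this saddle‑point computation with uniform control of the $o(1)$ in the exponent — the technically heaviest part of the whole argument — is precisely the analysis performed in \cite{Pit1,Pit2,LenPit} for the classical ($k=1$) model.
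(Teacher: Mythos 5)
Your set-up is sound: the product formulas for the conditional non-blocking probabilities, the observation that the strong-stable and super-stable integrands coincide when $\min(k_1,k_2)=1$, and the reduction of the super-stable case to $k_1=k_2=k$ via the monotonicity of $\Bbb P_{k_1,k_2}(M\ \text{sup-stable})$ are all correct. Your super-stable upper bound is essentially the paper's argument in different clothing: everything rests on the density of $u_i$ vanishing like $z^{k-1}$ at $0$, whence $\Bbb P\bigl(\sum_i u_i\le\sigma\bigr)\le (C\sigma/n)^{kn}$, and this small-ball estimate defeats the $n!$. Your lower bound in (ii) is correct and in fact simpler than the paper's (which needs log-convexity of $\psi_k$ on $[1-e^{-1},1)$): restricting the uniform side to $[0,1/n]$ already yields $e^{-2n(1+o(1))}\ge [e^2(k-1)!]^{-n}$.

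The serious gap is the strongly stable bound in (i). Your plan — bound the factor by $\exp\bigl(-\tfrac12(p_iv_j+u_iq_j)\bigr)$, integrate out the dependent pairs $(v_j,q_j)$, then control the joint small deviations of $(\sum_ip_i,\sum_iu_i)$ — is a genuinely two-dimensional large-deviation problem (both the pair $(v_j,q_j)$ and the pair of sums are dependent, and several regimes compete, e.g.\ $\sum_ip_i$ atypically small while $\sum_iu_i\asymp n$), and nothing in the sketch shows that it produces the exponent $\tfrac{k-1}{k+1}$. That exponent is not a generic ``degradation'': in the paper it comes from a specific algebraic reduction. Lemma \ref{prod-prod} gives $1-\prod_v(1-y_{j,v})\ge\bigl(\prod_vy_{j,v}\bigr)^{1/k_2}$, so the blocking probability is at least $P_xP_y^{1/k_2}+P_x^{1/k_1}P_y-P_xP_y\ge\bigl(P_xP_y\bigr)^{\frac{k+1}{2k}}$ by AM--GM; this collapses the problem to the single scalar $Z=\mathcal X\mathcal Y$ with explicit density $\phi_{k_1+k_2}$, and the rate falls out of $\int_0^1e^{-nz^{\a}}\phi_{k_1+k_2}(z)\,dz\asymp n^{-1/\a}(\log n)^{k_1+k_2-1}$ with $\a=\tfrac{k+1}{2k}$, since $n!\,e^n\,\bigl(cn^{-1/\a}\bigr)^n=n^{-n(\frac{1}{\a}-1)+o(n)}$ and $\tfrac1\a-1=\tfrac{k-1}{k+1}$. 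Without this reduction your route is not a routine repetition of the super-stable computation, and it is not established that it closes.

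A second gap is the upper bound in (ii). You attribute the factor $e^{z/2}$ in $\rho_k$ to Gaussian fluctuations of $\sum_jv_j$ and defer to ``the machinery of \cite{Pit1}''; in fact $e^{s/2}$ is an artifact of the elementary bound $-\tfrac12\le\bigl(\log\tfrac{1-e^{-z}}{z}\bigr)'\le0$ used to replace $s_j$ by $s$, and the decisive step your Laplace-method sketch omits is the log-convexity of $s\mapsto\tfrac{1-e^{-s}}{s}$, which gives $\bigl(\tfrac{1-e^{-s}}{s}\bigr)^n\le\prod_i\tfrac{1}{nz_i}$ and hence $I_n\le n^{-n}\bigl(\int_0^1e^{z/2}z^{-1}\psi_k(z)\,dz\bigr)^n$. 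This device is needed precisely because the $n$-fold convolution of $\psi_k$ admits no tractable upper bound, so a direct saddle-point analysis in the variable $\sum_iz_i$ — the route that works for $k=1$ — is not available here.
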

Thus, for $\max(k_1,k_2)>1$, the weakly stable matchings are, on average, super-exponentially
numerous. 
In stark contrast, when $\min(k_1,k_2)>1$ as well, the fractions of problem instances with at least one
strongly stable or super-stable matchings vanish at a super-exponential rate, particularly fast in the case of super-stability.
Even in the most favorable case, when  $\min(k_1,k_2)=1$, the fraction of solvable instances, for either strongly stable or super-stable matchings, is exponentially small.


\section{Cyclic stable matchings.} Suppose  the $n$ {\it complete\/} preference lists for agents in $\mathcal A_{s+1}$ by agents in $\mathcal A_s$ ($s\in [r]$,\,$\mathcal A_{r+1}:=\mathcal A_r$), $r n$ lists in total, are chosen uniformly at random and independently of each other. For simplicity
of notations, the sets $\mathcal A_s$ are copies of the set $[n]$.
\begin{lemma}\label{P(M)} Let $M$ be an arbitrary matching on $\cup_{s\in [r]} \mathcal A_s$, and let $\Bbb P(M)$ be the probability that $M$ is weakly stable. Introducing $\bold x^{(s)}=\{x^{(s)}_1,\dots, x^{(s)}_n\}\in [0,1]^n$,  $(s\in [r])$, we have
\begin{equation}\label{P(M)>}
\begin{aligned}
&\Bbb P(M)\ge \idotsint\limits_{\bold x^{(1)},\dots, \bold x^{(r)}\in [0,1]^n}\!\!\! F(\bold x^{(1)},\dots, \bold x^{(r)})\,\,d \bold x^{(1)}\,\cdots,\,d \bold x^{(r)},\\
&\qquad F(\bold x^{(1)},\dots, \bold x^{(r)})=\prod_{i_1,\dots, i_r\in [n]\atop i_1\neq\cdots\neq i_r\neq i_1}\Bigl(1-\prod_{s\in [r]}x_{i_s}^{(s)}\Bigr).
\end{aligned}
\end{equation}
\end{lemma}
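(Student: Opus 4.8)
The plan is to combine the standard ``latent uniform'' representation of random strict preferences with a correlation (FKG/Harris) inequality. Since the $rn$ preference lists are i.i.d. and uniformly random, $\Bbb P(M)$ is the same for every matching $M$, so I may take $M$ to be the canonical matching whose cycles are $(i,i,\dots,i)$, $i\in[n]$; thus the $M$-successor of $i\in\mathcal A_s$ is the copy of $i$ in $\mathcal A_{s+1}$. Represent the preference list of agent $i\in\mathcal A_s$ over $\mathcal A_{s+1}$ by a row $\{X^{(s)}_{i,j}\}_{j\in[n]}$ of i.i.d. $[0,1]$-uniforms, with $i$ preferring $j$ to $j'$ iff $X^{(s)}_{i,j}<X^{(s)}_{i,j'}$, and take all $rn^2$ variables $X^{(s)}_{i,j}$ mutually independent. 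A cyclic tuple $\bold i=(i_1,\dots,i_r)\in[n]^r$ is then a destabilizing cycle for $M$ --- every agent in it strictly prefers its tuple-successor to its $M$-successor --- exactly when $X^{(s)}_{i_s,i_{s+1}}<X^{(s)}_{i_s,i_s}$ for every $s\in[r]$ (indices mod $r$); any such $\bold i$ automatically has $i_{s+1}\neq i_s$ for all $s$, since $i_{s+1}=i_s=M(i_s)$ would force equality, not strict preference, at coordinate $s$.

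Next I would condition on the diagonal entries, writing $x^{(s)}_i:=X^{(s)}_{i,i}$; given these, the off-diagonal entries $\{X^{(s)}_{i,j}:j\neq i\}$ are still independent $[0,1]$-uniforms. Fix a tuple $\bold i=(i_1,\dots,i_r)$ with consecutive (cyclically) distinct coordinates, i.e. $i_1\neq\cdots\neq i_r\neq i_1$. The $r$ variables $X^{(s)}_{i_s,i_{s+1}}$, $s\in[r]$, lie in distinct matrices and are all off-diagonal, hence conditionally independent, so the conditional probability that $\bold i$ destabilizes $M$ equals $\prod_{s\in[r]}x^{(s)}_{i_s}$, and the event $A_{\bold i}:=\{\bold i\text{ is not destabilizing}\}=\bigcup_{s\in[r]}\{X^{(s)}_{i_s,i_{s+1}}\ge x^{(s)}_{i_s}\}$ has conditional probability $1-\prod_{s\in[r]}x^{(s)}_{i_s}$. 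Each $A_{\bold i}$ is an \emph{increasing} event in the off-diagonal variables, being a union of increasing events, so Harris's inequality for a product probability measure yields
\[
\Bbb P\Bigl(\,\bigcap_{\bold i}A_{\bold i}\ \Big|\ \text{diag}\Bigr)\ \ge\ \prod_{\bold i}\Bigl(1-\prod_{s\in[r]}x^{(s)}_{i_s}\Bigr)=F(\bold x^{(1)},\dots,\bold x^{(r)}),
\]
the product ranging over all tuples with $i_1\neq\cdots\neq i_r\neq i_1$, precisely the index set in \eqref{P(M)>}.

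Finally, $M$ is weakly stable iff $\bigcap_{\bold i}A_{\bold i}$ occurs, so averaging over the i.i.d. $[0,1]$-uniform diagonal $(\bold x^{(1)},\dots,\bold x^{(r)})$ gives
\[
\Bbb P(M)=\Bbb E\Bigl[\Bbb P\bigl(\textstyle\bigcap_{\bold i}A_{\bold i}\,\big|\,\text{diag}\bigr)\Bigr]\ \ge\ \idotsint\limits_{\bold x^{(1)},\dots,\bold x^{(r)}\in[0,1]^n}F(\bold x^{(1)},\dots,\bold x^{(r)})\,d\bold x^{(1)}\cdots d\bold x^{(r)},
\]
which is \eqref{P(M)>}. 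The only nonroutine ingredient is the correlation step --- which is also why the lemma asserts an inequality rather than an identity. For $r=2$ any two distinct tuples involve disjoint sets of $X$-entries, the conditional events $A_{\bold i}$ are genuinely independent, and one recovers Knuth's exact formula \eqref{Pn=}; for $r\ge3$, two tuples sharing an ``edge'' $X^{(s)}_{i_s,i_{s+1}}$ are positively but not perfectly correlated, so only the one-sided FKG bound is available. Concretely I would quote Harris's inequality (finitely many increasing events on $[0,1]^N$ under Lebesgue product measure) and apply it verbatim; the reduction by symmetry, the diagonal conditioning, and the computation of $\Bbb P(A_{\bold i}\mid\text{diag})$ are all bookkeeping.
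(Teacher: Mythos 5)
Your proposal is correct and follows essentially the same route as the paper: reduce by symmetry to the diagonal matching, encode preferences via $rn^2$ independent uniforms, condition on the diagonal entries so each non-blocking event has conditional probability $1-\prod_s x^{(s)}_{i_s}$, and invoke positive association (Harris/FKG) of these increasing events to get the product lower bound before unconditioning. The only cosmetic difference is your explicit remark that destabilizing tuples automatically satisfy $i_{s}\neq i_{s+1}$, which the paper handles by restricting the intersection to tuples with $M(a_s)\neq a_{s+1}$ from the outset.
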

\noindent {\bf Note.\/}  This Lemma extends an {\it identity\/} 
\begin{equation}\label{Knu}
\Bbb P(M)= \idotsint\limits_{\bold x, \bold y\in [0,1]^{n}}\,\prod_{i\neq j}\,\,\bigl(1-x_i y_j\bigr)\,d\bold x\,d\bold y,
\end{equation}
found by Knuth \cite{Knu} for the case $r=2$, i.e. the bipartite matchings. An alternative derivation of \eqref{Knu} was given later in \cite{Pit1}. The proof of \eqref{P(M)>} is patterned after that in \cite{Pit1}.

\begin{proof}   By the definition of weak stability, we have
\[
\{M\text{ w-stable}\}=\!\!\bigcap_{a_s\in \mathcal A_s,\,s\in [r]\atop M(a_s)\neq a_{s+1}}\Bigl\{a_2\overset {a_1}\prec M(a_1),\, a_3\overset {a_2}\prec M(a_2),\cdots, a_1\overset {a_r}\prec M(a_r)\Bigr\}^c.
\]
In words, there is no cyclic tuple $a_1\to a_2\to\cdots\to a_r\to a_1$, such that each $a_s$ strictly prefers $a_{s+1}$ to his successor $M(a_s)$ in $M$.
By symmetry,
$\Bbb P(M)$ does not depend on the choice of $M$. So we consider $M$ consisting of $r$-tuples $(i,\dots,i)$, $i\in [n]$.

To lower-bound $\Bbb P(M)$, we refine the probability space of the $r n$ independent uniform preferences. Let $X^{(s)}=\{X_{i,j}^{(s)}\}$, ($s\in [r]$),
 be the $n\times n$ matrices whose $r n^2$ entries are independent $[0,1]$-Uniforms. Reading the entries, from row $1$ to row $n$, in every one of $rn$ rows in the increasing order, starting with $X^{(1)}$ and ending with $X^{(r)}$,  we obtain the preference lists of agents from $\mathcal A_s$ for agents
in $A_{s+1}$, $s\in [r]$. These lists are clearly independent and uniform. Therefore
\begin{align*}
\{M\text{ w-stable}\}=\bigcap_{i_1,\dots, i_r\in [n]\atop i_1\neq i_2\neq\cdots\neq i_r\neq i_1}\!\!\!\Bigl\{X^{(1)}_{i_1,i_2}<X^{(1)}_{i_1,i_1},&\,X^{(2)}_{i_2,i_3}<X^{(2)}_{i_2,i_2},\\
&\,\dots,\, X^{(r)}_{i_r,i_1}<X^{(r)}_{i_r,i_r}\Bigr\}^c.
\end{align*}
Given $\bold x^{(1)},\dots,\bold x^{(r)}\in [0,1]^n$, we will use  ``$|\circ$'' to indicate conditioning on the event $\bigcap_{s\in [r], i\in [n]}\{X^{(s)}_{i,i}=x_i^{(s)}\}$.
Since $X_{i,j}^{(s)}$, $(i\neq j)$, remain independent, $[0,1]$-Uniforms upon conditioning ``$|\circ$'', we have then 
\begin{align*}
&\Bbb P\left(\Bigl\{X^{(1)}_{i_1,i_2}<X^{(1)}_{i_1,i_1},\,
X^{(2)}_{i_2,i_3}<X^{(2)}_{i_2,i_2},\,\dots,\, X^{(r)}_{i_r,i_1}<X^{(r)}_{i_r,i_r}\Bigr\}^c\Big|\circ\right)\\
&=\Bbb P\left(\Bigl\{X^{(1)}_{i_1,i_2}< x_{i_1}^{(1)},\,X^{(2)}_{i_2,i_3}< x_{i_2}^{(2)},\,\dots,\, X^{(r)}_{i_r,i_1}< x_{i_r}^{(r)}\Bigr\}^c\right)\\\
&\qquad\qquad\qquad\quad = 1 - \prod_{s\in [r]}x_{i_s} ^{(s)}.
\end{align*}
As for the events $\mathcal E(\bold i):=\Bigl\{X^{(1)}_{i_1,i_2}< x_{i_1}^{(1)},\,X^{(2)}_{i_2,i_3}< x_{i_2}^{(2)},\,\dots,\, X^{(r)}_{i_r,i_1}< x_{i_r}^{(r)}\Bigr\}^c$,
$(i_1\neq i_2\neq\cdots\neq i_r\neq i_1)$, they are interdependent: $\mathcal E(\bold i)$ and  $\mathcal E(\bold i')$ are independent only if the cyclic tuples $\bold i$ and $\bold i'$ do not share a common edge. Fortunately, each of the events $\mathcal E(\bold i)$ is monotone increasing with
respect to $X_{i_1,i_2}^{(i_1)},\dots, X_{i_r,i_1}^{(r)}$. Since all $X_{i,j}^{(s)}$, $(\i\neq j)$, remain independent upon conditioning $|\circ$,
 the events $\mathcal E(\bold i)$ are {\it positively associated\/} (see Grimmett and Stirzaker \cite{Gri}), yielding
\[
\Bbb P(M\text{ is stable}|\circ)\ge \prod_{i_1,\dots, i_r\in [n]\atop i_1\neq\cdots\neq i_r\neq i_1}\Bigl(1-\prod_{s\in [r]}x_{i_s}^{(s)}\Bigr).
\]
Taking expectations of both sides of this inequality we come to \eqref{P(M)>}.
\end{proof}
We use Lemma \ref{P(M)} to prove
\begin{theorem}\label{newE(Sn)>} Let $S_n$ denote the total number of weakly stable matchings. Then
\[
\Bbb E [S_n]\gtrsim (c_r+o(1))\,\left(\frac{n\log n}{2}\right)^{r-1},\quad c_r:= \Bbb P(T_{r-1}\in [1,2]),
\]
where $T_{r-1}=\sum_{j\in [r-1]}Y_j$, and $Y_j$ are independent $[0,1]$-uniform random variables.
\end{theorem}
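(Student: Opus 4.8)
The plan is to combine Lemma~\ref{P(M)} with the identity $\Bbb E[S_n] = (n!)^{r-1}\,\Bbb P(M)$, which holds because there are $(n!)^{r-1}$ matchings $M$ and each is weakly stable with the same probability $\Bbb P(M)$ by symmetry. Thus it suffices to show that the $rn$-dimensional integral in \eqref{P(M)>} is at least $(c_r+o(1))\,\bigl(\tfrac{\log n}{2n}\bigr)^{r-1}(n!)^{-(r-1)}\cdot(\text{correction})$; more precisely, after multiplying by $(n!)^{r-1}$ we want the integral times $(n!)^{r-1}$ to be $\gtrsim (c_r+o(1))\,(n\log n/2)^{r-1}$. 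The first step is to simplify the product $F(\bold x^{(1)},\dots,\bold x^{(r)})=\prod_{i_1\neq\cdots\neq i_r\neq i_1}\bigl(1-\prod_s x_{i_s}^{(s)}\bigr)$ by dropping the ``distinctness'' constraint on the indices: replace the restricted product by the full product over all $(i_1,\dots,i_r)\in[n]^r$, which only makes $F$ smaller (each extra factor lies in $[0,1]$) and hence preserves the lower bound. The full product factors in a usable way once we take logarithms: $\log F = \sum_{\bold i\in[n]^r}\log\bigl(1-\prod_s x_{i_s}^{(s)}\bigr)$, and the inner sum over $\bold i$ is a product of sums, so one is led to the quantity $\sum_{\bold i}\prod_s x_{i_s}^{(s)} = \prod_s\bigl(\sum_i x_i^{(s)}\bigr)$ at leading order in a $-\log(1-u)\approx u$ expansion.

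The second step is the change of variables that produces the $\log n$ factors. Following the pattern of \cite{Pit1}, I would substitute, for each side $s$, $x_i^{(s)} = u_i^{(s)}/(n/\lambda_s)$ or rather scale so that the ``total mass'' $\sigma_s := \sum_i x_i^{(s)}$ is of order $\log n$; concretely, restrict the integration region to the set where each $x_i^{(s)}$ is $O((\log n)/n)$ — equivalently $x_i^{(s)} = v_i^{(s)}\cdot t_s /n$ for suitable scalars $t_s$ and $v_i^{(s)}\in[0,1]$ — and observe that on this region $\prod_s x_{i_s}^{(s)} = O((\log n/n)^r) = o(1/n^{r-1})$ uniformly, so that $\log\bigl(1-\prod_s x_{i_s}^{(s)}\bigr) = -\prod_s x_{i_s}^{(s)} + O(n^{-2r+2})$, and summing the error over $n^r$ tuples still gives $o(1)$ provided $r\ge 3$ (this is exactly where $c_2=0$ and $r\ge 3$ enters). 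Hence on this region $F = \exp\bigl(-\prod_{s}\sigma_s + o(1)\bigr)$ where $\sigma_s = \sum_i x_i^{(s)}$. The integral then decouples across the $r$ coordinates except through the single coupling $\prod_s\sigma_s$, and a Laplace-type / Gamma-integral computation on each block — integrating $x_i^{(s)}$ over $[0,\text{cutoff}]$ — produces, after the standard coupon-collector asymptotics, a factor $(n\log n)$ from $r-1$ of the blocks while the last block's mass $\sigma_r$ is pinned by the constraint $\prod_s\sigma_s \in$ (bounded set) to force convergence of the remaining integral; keeping track of the constant leaves exactly $\Bbb P\bigl(\sum_{j\in[r-1]}Y_j\in[1,2]\bigr)$ with $Y_j$ i.i.d.\ uniform, together with the $2^{-(r-1)}$ from the scaling.

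Concretely, the bookkeeping I anticipate is: write $\sigma_s = n\,\bar x_s$ where $\bar x_s$ is the empirical mean; the volume of $\{\bold x^{(s)}: \sigma_s = \sigma\}$ behaves like $\sigma^{n-1}/(n-1)!$ for $\sigma$ up to $\approx\log n$ (beyond that the constraint $x_i^{(s)}\le 1$ bites, but the cutoff region avoids this); and $\int_0^{c\log n}\!\sigma^{n-1} e^{-(\text{stuff})}\,d\sigma/(n-1)!$ is evaluated by recognizing it as an incomplete Gamma integral whose bulk sits near $\sigma\approx n$... — here I must be careful, since the exponential weight $e^{-\prod_s\sigma_s}$ is negligible unless some $\sigma_s$ is small, so in fact $r-1$ of the $\sigma_s$ run freely up to $\log n$ (each contributing $\int_0^{\log n}\sigma^{n-1}\,d\sigma/(n-1)!\cdot(n!) \sim \log n$, wait — rather $n\log n$ after the $n!$ is distributed) and the last is constrained. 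Matching constants is the fiddly part. The main obstacle I foresee is precisely this constant-tracking across the scaling: making the substitution that turns the $r$-fold constrained integral into $\Bbb P(T_{r-1}\in[1,2])\cdot(n\log n/2)^{r-1}$ requires choosing the cutoffs and the normalization $t_s$ so that (a) the quadratic error $\sum\prod_s x_{i_s}^{(s)}\cdot\prod_s x_{i_s}^{(s)}$ is genuinely $o(1)$, which needs $r\ge3$, and (b) the boundary effects from $x_i^{(s)}\le 1$ and from the discarded ``non-distinct index'' factors are both $o(1)$; the second of these, showing that reinstating the distinctness of $i_1,\dots,i_r$ costs only a $1+o(1)$ factor, is routine (the discarded factors number $O(n^{r-1})$ and each is $1-O((\log n/n)^r)$) but must be stated. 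Everything else is a Laplace-method computation modeled on \cite{Pit1} and \cite{Pit2}.
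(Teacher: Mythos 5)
Your skeleton coincides with the paper's in outline: start from Lemma \ref{P(M)} and $\mathbb{E}[S_n]=(n!)^{r-1}\mathbb{P}(M)$, drop the distinctness constraint on the indices (legitimate for a lower bound, since the extra factors lie in $[0,1]$), linearize $\log(1-\cdot)$ so that $F\approx\exp\bigl(-\prod_s\sigma_s\bigr)$ with $\sigma_s=\sum_i x_i^{(s)}$, and extract $(\log n)^{r-1}$ from $r-1$ free directions together with a constant expressed as a probability for a sum of uniforms. But the quantitative core of your sketch is wrong, and it is exactly the part you flag as ``fiddly.'' You place the dominant contribution on the region where each $\sigma_s=O(\log n)$, i.e.\ each $x_i^{(s)}=O(\log n/n)$. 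After the change to mass variables the weight is $e^{-P}P^{\,n-1}$ with $P:=\prod_s\sigma_s$, maximized at $P=n-1$; with $P=O(\log^r n)$ you sit off the peak by a factor of order $(e\log^r n/n)^{n-1}$, so your region contributes a super-exponentially negligible amount and cannot yield the stated bound. Likewise your remark that $P$ is ``pinned by the constraint $\prod_s\sigma_s\in$ (bounded set)'' points at the wrong scale: the product is pinned near $n$, not near a constant.

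The correct window, used in the paper, is $\sigma_s=\xi^{(s)}\in[n^{\alpha},n^{\beta}]$ with $0<\alpha<\beta<1/2$. Substituting $\sigma=(\prod_s\xi^{(s)})^{1/r}$ for $\xi^{(r)}$ and applying Laplace's method around $\sigma^*=(n-1)^{1/r}$ produces the factor $(n-1)!$; this is where $r\ge3$ is genuinely needed, via $\phi''(\sigma^*)=-\Theta(n^{(r-2)/r})\to-\infty$ (for $r=2$ the second derivative is $O(1)$ and the method fails). It is not needed where you invoke it: the quadratic error in the $\log(1-z)$ expansion is $O(n^{-r(1-2\beta)})=o(1)$ for every $r$ once $\beta<1/2$ and $\sum_i(u_i^{(s)})^2\le 3/n$ is imposed. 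The $(\log n)^{r-1}$ then comes from $\int_{n^{\alpha}}^{n^{\beta}}d\xi^{(s)}/\xi^{(s)}=(\beta-\alpha)\log n$ over the $r-1$ remaining coordinates, not from masses of size $\log n$; and both constants you could not locate arise from the compatibility condition $2n^{1-\beta}\le\prod_{s\le r-1}\xi^{(s)}\le 0.5\,n^{1-\alpha}$ needed for $\sigma^*$ to lie well inside the innermost integration interval: setting $y^{(s)}=\log\xi^{(s)}/(\beta\log n)\in[\alpha/\beta,1]$ converts it into $\sum_s y^{(s)}\in[(1-\beta)/\beta,(1-\alpha)/\beta]$, which as $\alpha\to0$ and $\beta\to1/2$ yields $\beta^{r-1}\to 2^{-(r-1)}$ and $\mathbb{P}(T_{r-1}\in[1,2])$. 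Without identifying this polynomial scale, the pinning of $\prod_s\xi^{(s)}$ at $n$, and the resulting constraint on $\sum_s\log\xi^{(s)}$, the proposal does not go through.
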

\begin{proof}   
To lower-bound the integral in \eqref{P(M)>}, we switch to the $rn$ variables $\xi^{(s)}, \{u_i^{(s)}\}_{i\in [n-1]},\,(s\in [r])$: 
\begin{equation}\label{transnew}
\xi^{(s)} =\sum_{i\in [n]} x_i^{(s)},\quad u_i^{(s)}= \frac{x_i^{(s)}}{\xi^{(s)}},\,\,\,( i\in [n-1]). 
\end{equation}
We introduce the missing $u_n^{(s)}=\frac{x_n^{(s)}}{\xi^{(s)}}$ and get
\begin{equation}\label{clearnew}
\sum_{i\in [n]}u_i^{(s)}=1, \quad \xi^{(s)}\le n,\quad u_i^{(s)}\xi^{(s)}\le 1. 
\end{equation}
The Jacobian of the inverse transformation is $\Bigl(\prod_{s\in [r]}\xi^{(s)}\Bigr)^{n-1}$.  We consider
the subset $\Omega(n)$ of the region in \eqref{clearnew} where
\begin{align}
& n^{\a}\le \xi^{(s)}\le n^{\be},\quad (0<\a<\be<1/2),\label{1Omnew}\\
&\quad\, u_i^{(s)}\le \frac{2\log n}{n},\quad i\in [n],\,s\in [r], \label{2Omnew}\\
&\qquad\qquad\sum_{i\in [n]} (u_i^{(s)})^2\le \frac{3}{n}.\label{3Omnew}
\end{align}
The bounds \eqref{1Omnew}, \eqref{2Omnew} imply the inequalities in \eqref{clearnew}, with plenty of room to spare. For $(\xi^{(s)},\bold u^{(s)})_{s\in [r]}$ meeting
\eqref{1Omnew}, \eqref{2Omnew}, we have
\[
\prod_{s\in [r]} x_{i_s}^{(s)}=\prod_{s\in [r]} \xi^{(s)}u_i^{(s)}\le \left(n^{\be}\cdot \frac{2\log n}{n}\right)^r =O\bigl(n^{-r(1-\be)}\log^r n\bigr)=o(1).
\]
So, using also $\log (1-z)=-z+O(z^2)$, $(z\to 0)$, and \eqref{3Omnew}, we obtain
\begin{equation*}
\begin{aligned}
\sum_{i_1,\dots, i_r\in [n]\atop i_1\neq\cdots\neq i_r\neq i_1}\log\Bigl(1-&\prod_{s\in [r]}x_{i_s}^{(s)}\Bigr)\ge
\sum_{i_1,\dots, i_r\in [n]}\log\Bigl(1-\prod_{s\in [r]}x_{i_s}^{(s)}\Bigr)\\
&=-\sum_{i_1,\dots, i_r\in [n]}\prod_{s\in [r]}x_{i_s}^{(s)}+\sum_{i_1,\dots, i_r\in [n]} O\left(\prod_{s\in [r]}\bigl(x_{i_s}^{(s)}\bigr)^2\right)\\
&=-\prod_{s\in [r]}\xi^{(s)}+O\bigl(n^{-r(1-2\be)}\bigr)=-\prod_{s\in [r]}\xi^{(s)}+o(1),
\end{aligned}
\end{equation*}
since $\be<1/2$. The top inequality comes from dropping the constraint ''$i_1\neq\cdots\neq i_r\neq i_1$''. Our focus will be on $\prod_s \xi^{(s)}=\Theta(n)$,
in which case the conditions $n^{\a}\le \xi^{(s)}$ and \eqref{2Omnew} imply that the resulting additive difference is of order $n^{-\a}\log n=o(1)$.

Therefore, by \ref{P(M)>}, we have: 
\begin{equation}
\begin{aligned}\label{newP(M)>prod}
\Bbb P(M)&\ge (1+o(1))\!\!\!\!\!\!\!\idotsint\limits_{\{\xi^{(s)}\}_{s\in [r]}\text{ meet }\eqref{1Omnew}}\!\!\!\!\!\!\!\!\exp\Biggl(\!\!-\prod_{s\in [r]}\xi^{(s)}\!\!\Biggr)\!\Biggl(\prod_{s\in [r]}\xi^{(s)}\!\!\Biggr)^{n-1}\!\!\!\!\prod_{s\in [r]} d\xi^{(s)}\\
&\qquad\times \idotsint\limits_{\{\bold u^{(s)}\}_{s\in [r]}\text{ meet }\eqref{2Omnew}, \eqref{3Omnew}}\!\!\! 1\,\cdot\,\prod_{s\in [r]}d\bold u^{(s)}.
\end{aligned}
\end{equation}
{\bf (1)\/} Consider the first integral. Keeping the variables $\boldsymbol{\xi}:=\{\xi^{(s)}\}_{s\in [r-1]}$, we
introduce  $\sigma:=\left(\prod_{s\in [r]}\xi^{(s)}\right)^{1/r}$ instead of $\xi^{(r)}$.  The corresponding Jacobian is $r\sigma^{r-1}/\prod_{s\in [r-1]}\xi^{(s)}$. The integral becomes
\begin{equation}\label{Int1}
\idotsint\limits_{\boldsymbol\xi\in [n^{\a}, n^{\be}]^{r-1}}\prod_{s\in [r-1]}\frac{d \xi^{(s)}}{\xi^{(s)}}\int\limits_{\left(n^{\a}\prod_{s\in [r-1]}\xi^{(s)}\right)^{1/r}}
^{\left(n^{\be}\prod_{s\in [r-1]}\xi^{(s)}\right)^{1/r}}\!\!\!\!\!\!\!\!\!\!\!\!\!\! r\,\sigma^{r-1}\,e^{-\sigma^r+r(n-1)\log\sigma}\,d\sigma.
\end{equation}
$\phi(\sigma):=-\sigma^r+r(n-1)\log\sigma$ is concave, and attains its absolute maximum at $\sigma^*:=(n-1)^{1/r}$, so that $\phi(\sigma^*)=-(n-1)+(n-1)\log(n-1)$. Now
\[
\phi''(\sigma)= -r(r-1)\sigma^{r-2} -\frac{r(n-1)}{\sigma^2} \Longrightarrow    \phi''(\sigma^*) =-r^2 (n-1)^{\frac{r-2}{r}}.
\]
Since $r\ge 3$, we have $\phi^{''}(\sigma)=-\Theta(n^{\frac{r-2}{r}})\to -\infty$, for $\sigma\in [0.5 \sigma^*, 2\sigma^*]$. It follows, by the standard Gaussian approximation,  that if
\begin{equation}\label{sigmas}
(\sigma^* -\sigma_1)n^{\frac{r-2}{2r}},\quad (\sigma_2-\sigma^*)n^{\frac{r-2}{2r}}\to\infty,
\end{equation}
then
\begin{multline*}
 \int_{\sigma_1}^{\sigma_2} r\,\sigma^{r-1}\exp\bigl(-\sigma^r+r(n-1)\log\sigma\bigr)\,d\sigma\\
 =(1+o(1))r (\sigma^*)^{r-1} e^{\phi(\sigma^*)}\sqrt{\frac{2\pi}{-\phi^{''}(\sigma^*)}}\\
 =(1+o(1))\sqrt{2\pi n}\left(\frac{n-1}{e}\right)^{n-1}=(1+o(1)) (n-1)!\,.
\end{multline*}
In view of the innermost integration limits in \eqref{Int1}, $\sigma_i$ are given by
\[
\sigma_1(\boldsymbol\xi)=\Biggl(n^{\a}\prod_{s\in [r-1]}\xi^{(s)}\Biggr)^{1/r},\quad \sigma_2(\boldsymbol\xi)=\Biggl(n^{\be}\prod_{s\in [r-1]}\xi^{(s)}\Biggr)^{1/r}.
\]
The conditions \eqref{sigmas} will easily hold if
\begin{equation}\label{easy}
2n^{1-\be}\le \prod_{s=1}^{r-1}\xi^{(s)}\le 0.5 n^{1-\a}.
\end{equation}
So, uniformly for $\boldsymbol\xi$ meeting this condition, the innermost integral is asymptotic to $(n-1)!$. It remains  to estimate the integral obtained
from the one in \eqref{Int1} by replacing the innermost integral with $(n-1)!$, and adding the constraint \eqref{easy} to the integration range of $\boldsymbol\xi$. Introducing the new variables $y^{(s)}=\log \xi^{(s)}/(\be\log n)$, $(s\in [r-1])$, we obtain an asymptotic lower bound for the integral in \eqref{Int1}:
\begin{align*}
&(n-1)! (\log n)^{r-1}\be^{r-1}\!\!\!\!\!\idotsint\limits_{\bold y\in [\a/\be, 1]^{r-1}\atop (1-\be)/\be \le \sum_s y^{(s)}\le (1-\a)/\be}\!\!\!\!\!\!\! 1\cdot \prod_{s\in [r-1]} dy^{(s)}\\
&=(n-1)! (\log n)^{r-1}\be^{r-1} \Bbb P (T_{r-1}\in [(1-\be)/\be, (1-\a)/\be]);
\end{align*}
here $T_{r-1}=\sum_{s\in [r-1]}Y^{(s)}$, and $Y{(s)}$ are independent, $[0,1]$-Uniforms. Pushing $\a$ toward $0$ and $\be$ toward $1/2$, we can make 
this estimate arbitrarily close to 
\begin{equation}\label{lower1}
(n-1)! \left(\frac{\log n}{2}\right)^{r-1}\!\! \Bbb P (T_{r-1}\in [1, 2]).
\end{equation}
{\bf (2)\/} Turn to the second integral in \eqref{newP(M)>prod}. Observe that 
\[
(n-1)!\cdot \Bbb I\Biggl(\,\sum_{\nu\in [n-1]}\ell_{\nu}\le 1\!\Biggr)
\]
is the joint density of the lengths $L_1,\dots,L_{n-1}$ of the first $n-1$ intervals obtained by throwing uniformly at random $n-1$ points into the interval $[0,1]$. Therefore, by the definition of constraints \eqref{2Omnew}, \eqref{3Omnew} we have 
\begin{multline}\label{lower2}
\idotsint\limits_{\{\bold u^{(s)}\}_{s\in [r-1]}\text{ meet }\eqref{2Omnew}, \eqref{3Omnew}} 1\cdot \prod_{s\in [r-1]}d\bold u^{(s)}\\
=\frac{1}{\bigl((n-1)!\bigr)^r}\,\,\Bbb P^r\!\left(\max_{\nu \in [n]} L_{\nu}\le \frac{2\log n}{n},\quad \sum_{\nu\in [n]}L_{\nu}^2\le \frac{3}{n}\right)\\
=(1+o(1))\frac{1}{\bigl((n-1)!\bigr)^r},
\end{multline}
for the last equality see Pittel \cite{Pit1}.

Multiplying the lower bounds \eqref{lower1} and \eqref{lower2} we obtain
\[
\Bbb P(M)\ge (1+o(1)\left(\frac{\log n}{2 (n-1)!}\right)^{r-1}\Bbb P (T_{r-1}\in [1, 2]).
\]
Finally
\[
\Bbb E[S_n] =(n!)^{r-1}\Bbb P(M)\ge (1+o(1))\left(\frac{n\log n}{2}\right)^{r-1}\Bbb P (T_{r-1}\in [1, 2]).
\]
\end{proof}

\noindent {\bf Note.\/} 
The condition $r\ge 3$ played an important role in the  proof,
making $\phi^{''}(\sigma^*)=-\Theta(n^{\frac{r-2}{r}})\to-\infty$, and enabling us to claim that the dominant contribution to the innermost integral in \eqref{Int1} came from $\sigma$s within a factor $1+o(1)$ from $\sigma^*$. For $r=2$ this is not true, since $\phi^{''}(\sigma^*)=O(1)$. 
\section{Two-side stable matchings with partially ordered preferences.} We have two agent sets, $\mathcal A_1$ and $\mathcal A_2$, which are copies of $[n]$.
Suppose that the
preference lists of $n$ agents from $\mathcal A_1$ (from $\mathcal A_2$ resp.) for a marriage partner in $\mathcal A_2$ (in $\mathcal A_1$ resp.) are $n$ independent copies of $\mathcal P_{k_1}(n)$ ($\mathcal P_{k_2}(n)$ resp.),  where $\mathcal P_k(n)$ is the uniformly random $k$-dimensional
partial order on $[n]$, Winkler \cite{Win1}. (In addition, the $n$ copies of $\mathcal P_{k_1}(n)$ are independent of the $n$ copies of $\mathcal P_{k_2}(n)$.)

A generic $\mathcal P_k(n)$ is constructed by taking $k$ linear orders on the set $[n]$, uniformly and independently at random, from the set of all $n!$ linear orders, and forming the intersection of $k$ orders. Equivalently, $\mathcal P_k(n)$ can be generated
by throwing $n$ points $\bold Z^{(1)},\dots, \bold Z^{(n)}$, uniformly and independently of each other, into the cube $[0,1]^k$, equipped with the coordinate (linear) order
$\preceq$, and taking the order $\mathcal P$ on the $n$ points induced by $\preceq$. Uniformity of $\bold Z^{(j)}$ means that its coordinates 
$Z^{(j)}_1,\dots, Z^{(j)}_k$ are independent $[0,1]$-Uniforms. The $k$ coordinate orders $\mathcal P_1,\dots, \mathcal P_k$ are independent of each other, and $\mathcal P=\cap_{j=1}^k\mathcal P_j$, whence $\mathcal P=\mathcal P_k(n)$. Neglecting a zero-probability event, we have
\begin{equation}\label{negl}
\{\bold Z^{(\a)}\preceq\bold Z^{(\be)}\}=\{\bold Z^{(\a)} \boldsymbol{<}\bold Z^{(\be)}\}:=\bigcap_{u\in [k]}\{Z^{(\a)}_u<Z^{(\be)}_u\}.
\end{equation}

Back to $\mathcal A_1$ and $\mathcal A_2$, we introduce  $\bold X_i=\{\bold X_i^{(j)}\}_{j\in \mathcal A_2}$, ($i\in\mathcal A_1$), and 
$\bold Y_j=\{\bold Y_j^{(i)}\}_{i\in \mathcal A_1}$, ($j\in\mathcal A_2$), such that all $\bold X_i^{(j)}$ and $\bold Y_j^{(i)}$ are independent, and each 
$\bold X_i^{(j)}$ is $[0,1]^{k_1}$-uniform, while each of $\bold Y_j^{(i)}$ is $[0,1]^{k_2}$-uniform. For each $i\in \mathcal A_1$ ($j\in \mathcal A_2$ resp.)
the $n$-long sequence $\bold X_i$ ($\bold Y_j$ resp.) induces the random partial order $\mathcal P_{i,k_1}(n)$ on $\mathcal A_2$ ($\mathcal P_{j,k_2}(n)$ on $\mathcal A_1$ resp.), with all the partial orders independent of each other. That's our stable matching problem with random partially ordered preference lists.

Let $M$ be the bijective mapping from $\mathcal A_1$ to $\mathcal A_2$, such that, {\it numerically\/} $M(i)=i$, ($i\in [n]$), so that $M^{-1}(j)=j$, ($j\in [n]$).

$M$ is weakly stable (w-stable)
if no unmatched pair $i\in \mathcal A_1,\,j\in \mathcal A_2$ is such that $i$ strictly prefers $j$ to its partner $M(i)$ and $j$ strictly prefers $i$ to its partner $M^{-1}(j)$. So by  \eqref{negl}, we have
\[
\{M\,\text{w-stable}\}=\!\!\bigcap_{i\in \mathcal A_1,\, j\in\mathcal A_2\atop i\neq j}\Bigl\{\bold X_i^{(j)}<\bold X_i^{(i)},\,
\bold Y_j^{(i)}<\bold Y_j^{(j)}\Bigr\}^c.
\]
Conditioned on the event
\[
\{\bold X_{\a}^{(\a)}=\bold x_{\a},\bold Y_{\be}^{(\be)}=\bold y_{\be}\}_{\a\in \mathcal A_1,\be\in\mathcal A_2},  \,\, \bold x_{\a}\!=\!\{x_{\a,u}\}\!\in [0,1]^{k_1},
\,\bold y_{\be}\!=\!\{y_{\be,v}\}\!\in [0,1]^{k_2},
\]
 the events in the above intersection are independent. Therefore, using $|\circ$ to denote the conditioning, we have
\begin{multline}\label{w-stable}
\Bbb P(M\,\text{w-stable}|\circ)\\=\prod_{i\in \mathcal A_1,\, j\in\mathcal A_2\atop i\neq j}\left(1-\Bbb P\bigl(\bold X_i^{(j)}<\bold X_i^{(i)},\,
\bold Y_j^{(i)}<\bold Y_j^{(j)}|\circ\bigr)\right)\\
=\prod_{1\le i\neq j\le n}\Biggl(1-\prod_{u\in [k_1]} x_{i,u}\cdot \prod_{v\in [k_2]} y_{j,v}\Biggr).
\end{multline}
\indent Next, $M$ is strongly stable (s-stable) if no unmatched pair $i\in \mathcal A_1,\,j\in \mathcal A_2$ is such that {\it either\/} $i$ strictly prefers $j$ to its partner $M(i)$ and $j$ does not strictly prefer its partner $M^{-1}(j)$ to $i$, {\it or\/} $j$ strictly prefers $i$ to its partner $M^{-1}(j)$ and $i$ does not strictly prefer its partner $M(i)$ to $j$. Therefore
\begin{multline*}
\{M\,\text{s-stable}\}\\
=\!\!\bigcap_{i\in \mathcal A_1,\, j\in\mathcal A_2\atop i\neq j}\Biggl(\!\Bigl\{\bold X_i^{(j)}<\bold X_i^{(i)},\,
\bold Y_j^{(i)}\not > \bold Y_j^{(j)}\Bigr\}\bigcup
 \Bigl\{\bold X_i^{(j)}\not >\bold X_i^{(i)},\,\bold Y_j^{(i)}<\bold Y_j^{(j)}\Bigr\}\!\Biggr)^c.\\
 \end{multline*}
Therefore
\begin{multline}\label{s-stable}
\Bbb P(M\,\text{s-stable}|\circ)
=\!\!\!\!\prod_{1\le i\neq j\le n}\Biggl[1-\prod_{u\in [k_1]}x_{i,u}\cdot\!\Biggl(\!1-\prod_{v\in [k_2]}(1-y_{j,v})\!\Biggr)\\
-\prod_{v\in [k_2]} y_{j,v}\cdot\!\Biggl(\!1-\!\prod_{u\in [k_1]}(1-x_{i,u})\!\Biggr)
+\prod_{u\in [k_1]}x_{i,u}\cdot \prod_{v\in [k_2]} y_{j,v}\Biggr].
\end{multline}
\indent Finally, $M$ is super-stable (sup-stable) if no unmatched pair $i\in \mathcal A_1,\,j\in \mathcal A_2$ is such that $i$ does not strictly prefer $M(i)$ to $j$ and $j$ does not
strictly prefer $M^{-1}(j)$ to $i$. Therefore 
\[
\{M\,\text{sup-stable}\}=\!\!\bigcap_{i\in \mathcal A_1,\, j\in\mathcal A_2\atop i\neq j}\Bigl\{\bold X_i^{(i)}\not<\bold X_i^{(j)},\,
\bold Y_j^{(j)}\not<\bold Y_j^{(i)}\Bigr\}^c,
\]
so that
\begin{multline}\label{sup-stable}
\Bbb P(M\,\text{sup-stable}|\circ)=\prod_{1\le i\neq j\le n}\Bigl[1-\Bbb P\bigl(\bold X_i^{(i)}\not < \bold X_i^{(j)}|\circ\bigr)\,\Bbb P\bigl(\bold Y_j^{(j)}\not <\bold Y_j^{(i)}|\circ\bigr)\Bigr]\\
=\!\!\!\!\prod_{1\le i\neq j\le n}\Biggl[1-\Biggl(1-\prod_{u\in [k_1]}(1-x_{i,u})\Biggr)\Biggl(1-\prod_{v\in [k_2]}(1-y_{j,v})\Biggr)\Biggr].
\end{multline}

Let $\bold x=\{\bold x_{\a}\}_{\a\in \mathcal A_1}$, $\bold y=\{\bold y_{\be}\}_{\be\in \mathcal A_2}$, so that $\bold x\in [0,1]^{nk_1}$, $y\in [0,1]^{nk_2}$.
Unconditioning \eqref{w-stable}, \eqref{s-stable} and \eqref{sup-stable}, we have proved
\begin{lemma}\label{P(Mstab)=}
\begin{align*}
\Bbb P_{k_1,k_2}(M\,\text{w-stable})&=\idotsint\limits_{\bold x\in [0,1]^{nk_1},\, y\in [0,1]^{nk_2}} F_1(\bold x,\bold y)\,d\bold x\,d\bold y,\\
\Bbb P_{k_1,k_2}(M\,\text{s-stable})&=\idotsint\limits_{\bold x\in [0,1]^{nk_1},\, y\in [0,1]^{nk_2}} F_2(\bold x,\bold y)\,d\bold x\,d\bold y,\\
\Bbb P_{k_1,k_2}(M\,\text{sup-stable})&=\idotsint\limits_{\bold x\in [0,1]^{nk_1},\, y\in [0,1]^{nk_2}} F_3(\bold x,\bold y)\,d\bold x\,d\bold y,
\end{align*}
with $F_1$, $F_2$ and $F_3$  being the RHS expressions in \eqref{w-stable}, \eqref{s-stable} and \eqref{sup-stable}. For $k_1=k_2=1$, all three functions collapse into $F(\bold x,\bold y)=\prod_{1\le i\neq j\le n} (1-x_iy_j)$. In addition, $F_2(\bold x,\bold y)=F_3(\bold x,\bold y)$ for $k_2=1$.
This is not surprising since for $k_2=1$ super-stability is the same as strong stability.
\end{lemma}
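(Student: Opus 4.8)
The statement to be proved is Lemma~\ref{P(Mstab)=}, which asserts the three integral representations for $\Bbb P_{k_1,k_2}(M\,\text{w-stable})$, $\Bbb P_{k_1,k_2}(M\,\text{s-stable})$, $\Bbb P_{k_1,k_2}(M\,\text{sup-stable})$, together with the two degeneracy remarks ($k_1=k_2=1$ and $k_2=1$).

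The plan is essentially bookkeeping on top of the three conditional identities \eqref{w-stable}, \eqref{s-stable}, \eqref{sup-stable} that were derived just above the lemma statement. First I would recall the refined probability space: the independent uniform coordinate vectors $\{\bold X_i^{(j)}\}$ on $[0,1]^{k_1}$ and $\{\bold Y_j^{(i)}\}$ on $[0,1]^{k_2}$, whose induced coordinatewise partial orders are exactly the $n$ independent copies of $\mathcal P_{k_1}(n)$, resp.\ $\mathcal P_{k_2}(n)$, required by the model, and where the clean characterization \eqref{negl} of the order relation is valid off a null set. Then I would observe that the diagonal entries $\bold X_i^{(i)}$, $\bold Y_j^{(j)}$ are the relevant ``partner'' values, and that conditioning on $\{\bold X_\a^{(\a)}=\bold x_\a,\ \bold Y_\be^{(\be)}=\bold y_\be\}_{\a,\be}$ leaves all off-diagonal $\bold X_i^{(j)}$, $\bold Y_j^{(i)}$ ($i\neq j$) independent and uniform on their respective cubes. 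This is the point of the whole construction and it is what makes the conditional probabilities factor over the $n(n-1)$ unmatched pairs.

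Granting that, each of \eqref{w-stable}, \eqref{s-stable}, \eqref{sup-stable} is just the complement-of-union / inclusion-exclusion computation for a single pair $(i,j)$, using that for a $[0,1]^{k_1}$-uniform $\bold X_i^{(j)}$ one has $\Bbb P(\bold X_i^{(j)}<\bold x_i)=\prod_{u\in[k_1]}x_{i,u}$ and $\Bbb P(\bold X_i^{(j)}\not>\bold x_i)=1-\prod_{u\in[k_1]}(1-x_{i,u})$, and similarly with $k_2$ and $\bold y_j$; for s-stability one expands the probability of the union of the two events, giving the four-term bracket in \eqref{s-stable} via inclusion–exclusion, and for sup-stability one uses independence of the two ``$\not<$'' events given the conditioning. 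These per-pair factorizations being already in the excerpt, the only remaining move is to take expectation over the conditioning variables: since $\bold X_\a^{(\a)}$ and $\bold Y_\be^{(\be)}$ are themselves independent and uniform on $[0,1]^{k_1}$, resp.\ $[0,1]^{k_2}$, the unconditional probability is the integral of the conditional product over $\bold x\in[0,1]^{nk_1}$, $\bold y\in[0,1]^{nk_2}$ against Lebesgue measure. That yields the three displayed formulas with $F_1,F_2,F_3$ the right-hand sides of \eqref{w-stable}, \eqref{s-stable}, \eqref{sup-stable}.

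For the two closing remarks I would simply substitute parameters into the integrand formulas. When $k_1=k_2=1$, each product over $u$ or $v$ has a single factor, so $\prod_u x_{i,u}=x_i$, $1-\prod_u(1-x_{i,u})=x_i$, and likewise $y_j$; plugging into $F_1$ gives $\prod_{i\neq j}(1-x_iy_j)$ outright, while in $F_2$ the middle two terms become $x_i y_j + x_i y_j$ and with the last $+x_iy_j$ and the leading $1$ one gets $1-x_iy_j$ as well, and $F_3=\prod_{i\neq j}(1-x_iy_j)$ similarly. When $k_2=1$, one has $1-\prod_{v}(1-y_{j,v})=y_{j,1}=\prod_v y_{j,v}$, so the second bracketed term in $F_2$ vanishes and $F_2$ collapses to exactly the bracket defining $F_3$; this matches the stated fact that for $k_2=1$ (a linear order on that side) ``does not strictly prefer'' coincides with ``ties or prefers,'' i.e.\ strong stability equals super-stability. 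The only thing that requires genuine care rather than algebra is the conditional-independence claim in the middle paragraph — justifying that fixing the diagonal entries does not disturb independence or uniformity of the off-diagonal entries, and that \eqref{negl} may be used — but this is exactly the standard refined-space argument of \cite{Pit1} already invoked for Lemma~\ref{P(M)}, so I would state it by reference and spend the bulk of the write-up on the per-pair complement computations.
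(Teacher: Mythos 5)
Your proposal is correct and follows the same route as the paper, which proves this lemma simply by deriving the conditional product formulas \eqref{w-stable}, \eqref{s-stable}, \eqref{sup-stable} on the refined probability space (conditioning on the diagonal entries $\bold X_i^{(i)}$, $\bold Y_j^{(j)}$, under which the off-diagonal vectors stay independent uniform and the per-pair events factor) and then unconditioning, i.e.\ integrating over $\bold x\in[0,1]^{nk_1}$, $\bold y\in[0,1]^{nk_2}$. One trivial bookkeeping slip: in the $k_2=1$ collapse of $F_2$ it is the first term $-\prod_u x_{i,u}\bigl(1-\prod_v(1-y_{j,v})\bigr)$ that cancels against the last term $+\prod_u x_{i,u}\prod_v y_{j,v}$ (the second term is the one that survives and matches $F_3$), but your conclusion is right.
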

By symmetry, these probabilities do not depend on the choice of a matching $M$. Let $S_{n,w}$, $S_{n,s}$ and $S_{n,sup}$ denote the total number of
weakly stable, strongly stable and super stable matchings. Then Lemma \ref{P(Mstab)=} implies
\begin{equation}\label{ESnstab=}
\begin{aligned}
\Bbb E[S_{n,w}]&=n! \idotsint\limits_{\bold x\in [0,1]^{nk_1},\, \bold y\in [0,1]^{nk_2}} F_1(\bold x,\bold y)\,d\bold x\,d\bold y,\\
\Bbb E[S_{n,s}]&=n! \idotsint\limits_{\bold x\in [0,1]^{nk_1},\, \bold y\in [0,1]^{nk_2}} F_2(\bold x,\bold y)\,d\bold x\,d\bold y,\\
\Bbb E[S_{n,sup}]&=n! \idotsint\limits_{\bold x\in [0,1]^{nk_1},\, \bold y\in [0,1]^{nk_2}} F_3(\bold x,\bold y)\,d\bold x\,d\bold y.\\
\end{aligned}
\end{equation}
In \cite{Pit1}  for $k_1=k_2=1$, i.e. for the random totally ordered preference lists, we proved that $\Bbb E[S_n]\sim e^{-1} n\log n$. We will
analyze asymptotics of the three expectations for $\{k_1, k_2\}\neq\{1,1\}$. 

Observe that the two random orders on $[n]$, of dimension $k_1$ and $k_1+1$, can be naturally coupled in such a way that the latter 
 is the intersection of the former and the random independent total order.  Weak stability of a given matching $M$ means that there are no
 destabilizing unmatched pairs, i.e. strictly preferring each other to their partners in the matching. Under the coupling, the set of pairs destabilizing
$M$ for the $(k_1+1,k_2)$-problem is contained in the set of pairs destabilizing $M$ for the $(k_1,k_2)$-problem. It follows that $\Bbb P_{k_1,k_2}(M\,\text{w-stable})\le \Bbb P_{k_1+1,k_2}(M\,\text{w-stable})$, i.e.  $\Bbb P_{k_1,k_2}(M\,\text{w-stable})$ is an increasing function of $k_1,\,k_2$.

On the other hand, super-stability of $M$ means absence of a radically less selective set of unmatched pairs: a pair $(i,j)$ is classified as destabilizing
if neither $i$ strictly prefer $M(i)$ to $j$, nor $j$ strictly prefers $M^{-1}(j)$ to $i$. Under the coupling, this set is increasing with $k_1$ and $k_2$.
Therefore $\Bbb P_{k_1,k_2}(M\,\text{sup-stable})$ is a decreasing function of $k_1,\,k_2$.

\subsection{Super-stable matchings.} First, let us upper-bound $F_3(\bold x,\bold y)$ defined in \eqref{sup-stable}. Using 
the geometric-arithmetic mean inequality, we have
\begin{equation}\label{crudeprod}
\prod_{u\in [k_1]}(1-x_{i,u})\le \left(1-\frac{1}{k_1}\sum_{u\in [k_1]}x_{i,u}\right)^{k_1}\le 1-\frac{1}{k_1}\sum_{u\in [k_1]}x_{i,u},
\end{equation}
and likewise 
\[
\prod_{v\in [k_2]}(1-y_{j,v})\le1-\frac{1}{k_2}\sum_{v\in [k_2]}y_{j,v}.
\]
\begin{equation}\label{F_3<}
F_3(\bold x,\bold y)\le \prod_{1\le i\neq j\le n}\Biggl[1-\frac{1}{k_1k_2}\Biggl(\sum_{u\in [k_1]}x_{i,u}\Biggr)\cdot \Biggl(\sum_{v\in [k_2]}y_{j,v}\Biggr)\Biggr].
\end{equation}
Let $f_k(z)$ denote the density of $\sum_{w\in [k]}Z_w$, $Z_w$ being independent $[0,1]$-Uniforms. 
Introducing $\boldsymbol\xi=\{\xi_{\a}\}_{\a\in \mathcal A_1}$, $\boldsymbol\eta=\{\eta_{\be}\}_{\be\in \mathcal A_2}$, $\boldsymbol\xi\in [0,k_1]^n$,
$\boldsymbol\eta\in [0,k_2]^n$, we obtain from \eqref{ESnstab=} that
\begin{align}
\Bbb E[S_{n,sup}]&\le n!\, I_n,\quad I_n:=\idotsint\limits_{\boldsymbol \xi\in [0,k_1]^n,\, \boldsymbol\eta\in [0,k_2]^n} F_3^*(\boldsymbol \xi,\boldsymbol\eta)\,d\boldsymbol\xi\,d\boldsymbol \eta,\label{In}\\
F_3^*(\boldsymbol \xi,\boldsymbol\eta)&=\prod_{1\le i\neq j\le n}\!\Biggl(1-\frac{\xi_i\eta_j}{k_1k_2}\Biggr)\cdot\prod_{i\in [n]}f_{k_1}(\xi_i)\cdot\prod_{j\in [n]}f_{k_2}(\eta_j).\label{F*}
\end{align}
\indent Suppose $k_1>1$. Using $1-\zeta\le e^{-\zeta}$ and $f_{k_2}(\eta)\le \eta^{k_2-1}/(k_2-1)!$, we have
\begin{equation}\label{supp}
\begin{aligned}
&I_n\le \idotsint\limits_{\boldsymbol\xi\in [0,k_1]^n}\Biggl(\prod_{j=1}^n\int_0^{k_2}\exp\Biggl(-\frac{\eta s_j}{k_1k_2}\Biggr)\frac{\eta^{k_2-1}}{(k_2-1)!}\,d\eta\Biggr)
\prod_{i\in [n]}f_{k_1}(\xi_i)\,d\boldsymbol\xi,\\
&\qquad\qquad\qquad\qquad\qquad\quad  s_j:=\sum_{i\neq j}\xi_i. 
\end{aligned}
\end{equation}
\indent Fix $a\in (0,1)$ and write $I_n=I_{n,1}+I_{n,2}$ where $I_{n,1}$ is the contribution of $\boldsymbol\xi$ with $s:=\sum_{i\in [n]}\xi_i\le n^a$,
and $I_{n,2}$ is the contribution of $\boldsymbol\xi$ with $s>n^a$. Introducing $\ga:=k_2^{k_2-1}/(k_2-1)!$ and integrating $e^{-\eta s_j/k_1k_2}$,  we obtain
\[
I_{n,1}\le\ga_1^n \idotsint\limits_{s\le n^a}\prod_{j\in [n]}\frac{1-\exp\bigl(-s_j/k_1\bigr)}{s_j/k_1}\prod_{i\in [n]}f_{k_1}(\xi_i)\,d\boldsymbol\xi,
\quad \ga_1=k_2\ga.
\]
Now
\begin{multline}\label{elem}
\Biggl(\log\frac{1-e^{-z}}{z}\Biggr)'=-\frac{e^z-1-z}{z(e^z-1)}\\
=-\frac{\sum_{j\ge 2}z^j/j!}{\sum_{j\ge 2}z^j/(j-1)!}\in [-\max_{j\ge 2}1/j,0]=[-1/2,0].
\end{multline}
Using this inequality and $s-s_j=\xi_j$, we have
\begin{equation}\label{elem1}
\log\frac{1-\exp\bigl(-s_j/k_1\bigr)}{s_j/k_1}\le \frac{1-\exp\bigl(-s/k_1\bigr)}{s/k_1} +\frac{1}{2k_1}\, \xi_j,
\end{equation}
implying that
\begin{equation}\label{elem2}
\sum_{j\in [n]}\log\frac{1-\exp\bigl(-s_j/k_1\bigr)}{s_j/k_1}\le n\log\frac{1-\exp\bigl(-s/k_1)\bigr)}{s/k_1} +\frac{1}{2k_1} s.
\end{equation}
Therefore
\begin{multline*}
I_{n,1}\le \ga_1^n \idotsint\limits_{s\le n^a}\left(\frac{1-e^{-s/k_1}}{s/k_1}\right)^n e^{s/2k_1}
\prod_{i\in [n]}f_{k_1}(\xi_i)\,d\boldsymbol\xi\\
=\ga_1^n\cdot\Bbb E\left[\left(\frac{1-e^{-\mathcal S_n/k_1}}{\mathcal S_n/k_1}\right)^n\cdot e^{\mathcal S_n/2k_1}\cdot\Bbb I(\mathcal S_n\le n^a)\right],
\end{multline*}
where $\mathcal S_n$ is the sum of $k_1n$ independent $[0,1]$-Uniforms. So, since the density of $\mathcal S_n$ is bounded by $s^{k_1n-1}/(k_1n-1)!$, we drop $(1-e^{-\mathcal S_n/k_1})^n$  and obtain
\begin{align*}
I_{n,1}&\le \ga_1^n k_1^n e^{cn^a/k_1}\int_0^{n^a}\frac{s^{(k_1-1)n-1}}{(k_1n-1)!}\,ds\\
&\le (\ga_1k_1+o(1))^n\cdot\frac{n^{a(k_1-1)n}}{(k_1n)!} \le \ga_2^n\cdot n^{n[a(k_1-1)-k_1]},\quad (\ga_2:=3\ga_1),
\end{align*}
as $(k_1n)!> (k_1n/e)^{k_1n}$. Therefore
\begin{equation}\label{In1<}
n!\cdot I_{n,1}\le \gamma_2^n\cdot n^{n(k_1-1)(a-1)}\to 0,
\end{equation}
since $k_1>1$ and $a<1$.

\indent Turn to $I_{n,2}$. We estimate
\begin{multline*}
\int_0^{k_2} e^{-\frac{\eta s_j}{k_1k_2}}\,\frac{\eta^{k_2-1}}{(k_2-1)!}\,d\eta
\le \left(\frac{k_1k_2}{s_j}\right)^{k_2}\int_0^{\infty} e^{-z}\frac{z^{k_2-1}}{(k_2-1)!}\,dz =\left(\frac{k_1k_2}{s_j}\right)^{k_2},
\end{multline*}
implying that
\[
I_{n,2}\le \idotsint\limits_{s>n^a}\prod_{j\in [n]}\left(\frac{k_1k_2}{s_j}\right)^{k_2}\prod_{i\in [n]}f_{k_1}(\xi_i)\,d\boldsymbol\xi.
\]
Since $s\ge n^a$, we have 
\begin{equation}\label{prodsj}
\prod_{j\in [n]}s_j^{-1}=s^{-n}\exp\bigl(1+O(n^{-a})\bigr).
\end{equation}
Therefore, picking $\ga_3> (k_1k_2)^{k_2}$, and $\ga_4>\ga_3 (e^{k_1}/k_1k_2)$,  we have: for $n$ large enough,
\begin{align*}
I_{n,2}&\le\ga_3^n\idotsint\limits_{s>n^a} s^{-k_2n}\prod_{i\in [n]}f_{k_1}(\xi_i)\,d\boldsymbol\xi\\
&\le\ga_3^n\int_{n^{\a}}^{k_1n}\frac{s^{(k_1-k_2)n-1}}{(k_1n-1)!}\,ds\le \ga_4^n\,n^{-k_2n}.
\end{align*}
Consequently
\begin{equation}\label{In2<}
n!\cdot I_{n,2}\le \ga_4^n\, n^{-(k_2-1)n}.
\end{equation}
Combining \eqref{In1<} and \eqref{In2<}, we arrive at 
\begin{lemma}\label{nosup} Suppose that $k_1,\, k_2\ge 2$. Then
\[
\Bbb E[\mathcal S_{n,\text{sup}}]\le n^{-n\bigl[\min(k_1-1,k_2-1)-o(1)\bigr]},
\]
implying (by Markov inequality) that
\[
\Bbb P(S_{n,\text{sup}} >0) \le n^{-n\bigl[\min(k_1-1,k_2-1)-o(1)\bigr]}.
\]
In words, the fraction of problem instances with at least one super-stable matching is super-exponentially small.
\end{lemma}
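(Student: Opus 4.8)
The plan is to assemble the two region estimates \eqref{In1<} and \eqref{In2<} already in hand. Recall from \eqref{In}--\eqref{F*} that, for $k_1>1$, $\Bbb E[S_{n,\text{sup}}]\le n!\,I_n$, and that, for every value of the auxiliary parameter $a\in(0,1)$, one has the split $I_n=I_{n,1}+I_{n,2}$ according as $s=\sum_{i\in[n]}\xi_i$ is at most $n^a$ or exceeds it. So it suffices to bound $n!\,I_{n,1}$ and $n!\,I_{n,2}$ separately and keep the worse of the two.

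Fix an arbitrary $\varepsilon>0$ and apply \eqref{In1<} with some $a=a(\varepsilon)\in(0,1)$ small enough that $(k_1-1)(1-a)\ge k_1-1-\varepsilon$; this gives $n!\,I_{n,1}\le\gamma_2^n\,n^{-n(k_1-1-\varepsilon)}$, where $\gamma_2$ is a constant. By \eqref{In2<}, $n!\,I_{n,2}\le\gamma_4^n\,n^{-n(k_2-1)}$. Adding,
\[
\Bbb E[S_{n,\text{sup}}]\le n!\,I_n\le\gamma_2^n\,n^{-n(k_1-1-\varepsilon)}+\gamma_4^n\,n^{-n(k_2-1)}\le 2\,\bigl(\max(\gamma_2,\gamma_4)\bigr)^n\,n^{-n\min(k_1-1-\varepsilon,\,k_2-1)}.
\]
Since $\gamma_2,\gamma_4$ are constants, $\bigl(\max(\gamma_2,\gamma_4)\bigr)^n=n^{o(n)}$, and the factor $2$ is likewise $n^{o(n)}$, so the right-hand side is $n^{-n[\min(k_1-1-\varepsilon,\,k_2-1)-o(1)]}$; as $\varepsilon>0$ is arbitrary this is $n^{-n[\min(k_1-1,\,k_2-1)-o(1)]}$, the asserted bound on $\Bbb E[S_{n,\text{sup}}]$. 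Finally, $S_{n,\text{sup}}$ is a nonnegative integer-valued random variable, so Markov's inequality yields $\Bbb P(S_{n,\text{sup}}>0)=\Bbb P(S_{n,\text{sup}}\ge1)\le\Bbb E[S_{n,\text{sup}}]$, the stated probability bound.

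I anticipate no genuine obstacle: all of the substantive work — the geometric-arithmetic mean bound \eqref{F_3<} on $F_3$, the reduction \eqref{In}--\eqref{F*} to the integral $I_n$, and the two estimates \eqref{In1<} and \eqref{In2<} obtained by splitting on the size of $s$ and integrating out $\boldsymbol\eta$ and then $\boldsymbol\xi$ against the densities $f_{k_1},f_{k_2}$ — has already been carried out. The only point calling for a little care is the role of the dummy parameter $a$: one notes that it may be taken as close to $0$ as one likes so that the decay of $n!\,I_{n,1}$ sharpens from $n^{-n(k_1-1)(1-a)}$ to essentially $n^{-n(k_1-1)}$, and that the exponentially large constants $\gamma_2^n,\gamma_4^n$, being $n^{o(n)}$, vanish into the $o(1)$ in the exponent, whereupon whichever of $k_1,k_2$ is smaller determines the rate $\min(k_1-1,k_2-1)$.
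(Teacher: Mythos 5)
Your proposal is correct and follows exactly the paper's route: the paper obtains the lemma precisely by combining the two region estimates \eqref{In1<} and \eqref{In2<}, and your handling of the auxiliary parameter $a$ (pushing it toward $0$ so the $I_{n,1}$ exponent sharpens to $k_1-1$) together with absorbing the exponential constants into the $o(1)$ in the exponent is the intended, and correct, way to finish. The Markov step is likewise the one the paper uses.
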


Consider the remaining case $k_1\ge 2$ and $k_2=1$. Here $\{y_{j,v}\}=y_{j,1}=:y_j$, and  so
\begin{equation*}
F_3(\bold x,\bold y)=\prod_{1\le i\neq j\le n}\Biggl[1-y_j\Biggl(1-\prod_{u\in [k_1]}(1-x_{i,u})\Biggr)\Biggl].
\end{equation*}
$\prod_{u\in [k_1]}(1-x_{i,u})$ can be viewed as the generic value of $\prod_{i\in [k_1]} (1-X_{i,u})$, $X_{i,u}$ being
independent Uniforms. Obviously $1-X_{i,u}$ are also independent Uniforms. It is known that the product of $k$ independent $[0,1]$-Uniforms has density $\phi_k(z):=[\log^{k-1}(1/z)]/(k-1)!$.  (A simple inductive proof is based on a recurrence
\[
f_k(z)=\int_z^1 \eta^{-1} f_{k-1}(z/\eta)\,d\eta. )
\]
Then $1$ minus the random product has
density $\psi_k(z)=\frac{\log^{k-1} (1-z)^{-1}}{(k-1)!}$, $(z\in (0,1])$, and we are back to the uniform density if $k=1$. So, introducing the sequence $\{Z_i\}_{i\in [n]}$ of independent random variables with 
common density $\psi_{k_1}(z)$ we obtain that
\begin{equation}\label{E(YZ)}
\Bbb E[S_{n,\text{sup}}]=n!\cdot I_n, \quad I_n:=\Bbb E\Biggl[\prod_{1\le i\neq j\le n}\Bigl(1-Y_j Z_i\Bigr)\Biggr];
\end{equation}
here $Y_j$ are $[0,1]$-Uniforms, which are independent among themselves and from $\{Z_i\}$. Analogously to \eqref{supp}, we write
\begin{equation*}
I_n\le \idotsint\limits_{\boldsymbol z\in [0,1]^n}\Biggl(\prod_{j=1}^n\int_0^1 e^{-y s_j}\,dy\Biggr)
\prod_{i\in [n]}\psi_{k_1}(z_i)\,d\boldsymbol z,\quad s_j:=\sum_{i\neq j} z_i.
\end{equation*}
The innermost integral is $(1-e^{-s_j})/s_j$; so arguing as in \eqref{elem}-\eqref{elem2}, we obtain
\[
I_n\le \idotsint\limits_{\bold z\in [0,1]^n}e^{s/2}\,\left(\frac{1-e^{-s}}{s}\right)^n \prod_{i\in [n]}\psi_{k_1}(z_i)\,d\bold z,\quad s:=\sum_{i\in [n]}z_i.
\]
Unlike the case of the uniform density, we have no tractable  {\it upper\/} bound for the $n$-th order convolution of the density $\psi_{k_1}(z)$
with itself. Fortunately
$(1-e^{-s})/s$ is log-convex:
\[
\Bigl(\log(1-e^{-s})-\log s\Bigr)^{\prime\prime}=s^{-2}-(e^{s/2}-e^{-s/2})^{-2}>0.
\]
(Alternatively, this function is the Laplace transform of the uniform density, and it is known that Laplace transform of any nonnegative function
is log-convex.) Therefore
\[
\left(\frac{1-e^{-s}}{s}\right)^n\le \prod_{i\in [n]}\frac{1-e^{-nz_i}}{nz_i}\le \prod_{i\in [n]}\frac{1}{nz_i},
\]
so that
\[
I_n\le n^{-n} J_{k_1}^n,\quad J_{k_1}:=\int_0^1 e^{z/2}\,z^{-1}\psi_{k_1}(z)\,dz.
\]
Consequently
\[
\Bbb E[S_{n,\text{sup}}]\le n!\,I_n =O(n^{1/2}\rho_{k_1}^n),\quad \rho_{k}= e^{-1}J_{k}.
\]
By Maple: $\rho_2=0.8287956957,\,\rho_3=0.8287956957,\,\rho_4=0.6329102250$. In general, $\rho_k$ decreases with
$k$ increasing, because $1-\prod_{u\in [k]}X_u\le 1-\prod_{i\in [k+1]}X_u$, and $z^{-1} e^{z/2}$ is decreasing
for $z\in [0,1]$.
Therefore we have proved

\begin{lemma}\label{yessup} Suppose $\min(k_1,k_2)=1$ and $k:=\max(k_1,k_2)>1$. Then  for $n$ large enough
we have $\Bbb E[S_{n,\text{sup}}]\le (\rho_k+o(1))^n$, $\rho_k$ decreases with $k$ and $\rho_2<0.83$.
\end{lemma}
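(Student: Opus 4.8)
The plan is to run through the chain of reductions already prepared for the one-sided partially ordered problem. Since the model is symmetric in its two sides, I would assume $k_2=1$ and $k_1=k\ge 2$ (as noted in Lemma~\ref{P(Mstab)=}, super-stability then also coincides with strong stability). Substituting $k_2=1$ into \eqref{sup-stable}--\eqref{ESnstab=} gives the scalar representation \eqref{E(YZ)}, namely $\Bbb E[S_{n,\text{sup}}]=n!\,I_n$ with
\[
I_n=\Bbb E\Bigl[\prod_{1\le i\neq j\le n}(1-Y_jZ_i)\Bigr],
\]
where the $Y_j$ are i.i.d.\ $[0,1]$-Uniform, the $Z_i$ are i.i.d.\ with density $\psi_k(z)=\log^{k-1}(1-z)^{-1}/(k-1)!$, and the two families are independent. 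So the whole task reduces to estimating this single expectation.

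First I would bound $1-Y_jZ_i\le e^{-Y_jZ_i}$ and integrate out the $Y_j$, producing $\prod_j\frac{1-e^{-s_j}}{s_j}$ with $s_j=\sum_{i\neq j}Z_i$, exactly as in \eqref{supp}. Using the elementary estimate $\bigl(\log\frac{1-e^{-z}}{z}\bigr)'\in[-\tfrac12,0]$ from \eqref{elem} together with $s-s_j=Z_j$, each $s_j$ can be replaced by the full sum $\mathcal S:=\sum_i Z_i$ at the price of a factor $e^{Z_j/2}$; multiplying over $j$ as in \eqref{elem2} yields $I_n\le\Bbb E\bigl[e^{\mathcal S/2}\,(\tfrac{1-e^{-\mathcal S}}{\mathcal S})^n\bigr]$.

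The step I expect to be the genuine obstacle is the one flagged in the text: in contrast with the uniform case of \cite{Pit1}, there is no usable upper bound for the $n$-fold self-convolution of $\psi_k$, so one cannot simply integrate $(\tfrac{1-e^{-s}}{s})^n$ against a density estimate. The way out is to observe that $s\mapsto\frac{1-e^{-s}}{s}=\int_0^1 e^{-sy}\,dy$ is the Laplace transform of the uniform density, hence log-convex; applying Jensen's inequality to $\log\frac{1-e^{-s}}{s}$ at the points $nZ_1,\dots,nZ_n$ with weights $1/n$ gives $\bigl(\tfrac{1-e^{-\mathcal S}}{\mathcal S}\bigr)^n\le\prod_i\frac{1-e^{-nZ_i}}{nZ_i}\le\prod_i\frac1{nZ_i}$. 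The expectation then factorizes, $I_n\le n^{-n}J_k^{\,n}$ with $J_k=\int_0^1 e^{z/2}z^{-1}\psi_k(z)\,dz$ (finite because $\psi_k(z)=O(z^{k-1})$ as $z\to0$ and $k\ge2$, and $\psi_k$ has an integrable singularity at $1$), and Stirling's formula gives $\Bbb E[S_{n,\text{sup}}]=n!\,I_n=O(\sqrt n\,\rho_k^{\,n})$ with $\rho_k=e^{-1}J_k$, i.e.\ the claimed $(\rho_k+o(1))^n$.

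For the monotonicity I would note that $\prod_{u\in[k+1]}X_u\le\prod_{u\in[k]}X_u$ pointwise, so $Z$ with parameter $k+1$ stochastically dominates $Z$ with parameter $k$, while $z\mapsto e^{z/2}/z$ is decreasing on $(0,1]$; hence $J_{k+1}\le J_k$ and $\rho_{k+1}\le\rho_k$. The bound $\rho_2<0.83$ is then a direct numerical evaluation, $\rho_2=e^{-1}\int_0^1 e^{z/2}z^{-1}\log(1-z)^{-1}\,dz\approx0.829$, the integrand being bounded near $z=0$ and having an integrable singularity near $z=1$.
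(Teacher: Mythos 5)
Your proposal is correct and follows essentially the same route as the paper: the reduction to $I_n=\Bbb E\bigl[\prod_{i\neq j}(1-Y_jZ_i)\bigr]$ via the density $\psi_k$, the bound $e^{s/2}\bigl(\tfrac{1-e^{-s}}{s}\bigr)^n$ using \eqref{elem}--\eqref{elem2}, the log-convexity/Jensen step to factorize as $\prod_i(nZ_i)^{-1}$, and the same monotonicity argument for $\rho_k$. Nothing to add.
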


A sharp contrast between the bounds in Lemma \ref{nosup} and Lemma \ref{yessup} raises the question: is  $\Bbb E[S_{n,\text{sup}}]$ {\it exactly\/} exponentially small under the conditions of Lemma \ref{yessup}? To answer positively, we need
a sufficiently sharp lower bound for $I_n$ in \eqref{E(YZ)}. We start with a bound
\begin{align*}
&\qquad\qquad\qquad I_n:=\Bbb E\Biggl[\prod_{1\le i\neq j\le n}\Bigl(1-Y_j Z_i\Bigr)\Biggr]\ge I_n^*,\\
&I_n^*:=\Bbb E\Bigl[\Bbb I\bigl(\max Y_j\le \be;\,\min Z_i\ge 1- e^{-1}\bigr)\cdot\prod_{1\le i\neq j\le n}\Bigl(1-Y_j Z_i\Bigr)\Bigr];
\end{align*}
here 
\[
\be=1-\frac{\hat{\a}}{\a},\quad \a>1\,\text{ and }\, \hat{\a}e^{-\hat{\a}}=\a e^{-\a},
\]
so that $\hat{\a}<1$. For this pair $(\a,\be)$, we have: if $Y_j\le\be$, then $1-Y_jZ_i\ge e^{-\a Y_jZ_i}$. 

\noindent (Indeed, for $\eta\le\be$, we have
\[
\a\ge \a (1-\eta)\ge \a(1-\be)=\hat{\a}.
\]
Therefore
\[
(1-\eta)e^{\a\eta}=\a(1-\eta)e^{-\a(1-\eta)}\cdot \a^{-1} e^{\a}\ge \hat{\a}e^{-\hat{\a}}\cdot \a^{-1}e^{\a}=1.)
\]

\noindent The
constraint $\min Z_i \ge 1-e^{-1}$ is imposed because the density $\psi_{k_1}(z)$ is log-convex for $z\in [1-e^{-1},1)$,
a property we use in the second line below.
Consequently
\begin{align*}
I_n^*&\ge \idotsint\limits_{\bold z\in [1-e^{-1},1]^n}\Biggl(\int_0^{\be} e^{-\a ys}\,dy\Biggr)^n\prod_{i\in [n]}\psi_{k_1}(z_i)\,d\bold z\\
&\ge \idotsint\limits_{\bold z\in [1-e^{-1},1]^n}\Biggl(\frac{1-e^{-\a\be s}}{\a s}\Biggr)^n\cdot \psi_{k_1}^n(s/n)\,d\bold z\\
&\ge\Biggl(\frac{1-e^{-\a\be n(1-e^{-1})}}{\a n}\Biggr)^n \bigl[(k_1-1)!\bigr]^{-n}\cdot  \idotsint\limits_{\bold z\in [1-e^{-1},1]^n}
1\, d\bold z\\
&\ge 0.5 \bigl(\a e (k_1-1)!\, n\bigr)^{-n},
\end{align*}
if $n$ is large. Therefore
\[
\Bbb E[S_{n,\text{sup}}]\ge n!\, I_n^*\ge 0.5\bigl(\a e^2(k_1-1)!\bigr)^{-n},
\]
for every $\a>1$ if $n\ge n(\a)$. Thus
\begin{lemma}\label{yessup!} Under the conditions of Lemma \ref{yessup},  we have
\[
\Bbb E[S_{n,\text{sup}}]\ge (r_k -o(1))^n,\quad r_k:=\bigl(e^2(k_1-1)!\bigr)^{-1}.
\]
\end{lemma}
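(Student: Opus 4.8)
The plan is to produce an exponentially small but nonzero lower bound for $I_n$ in \eqref{E(YZ)}, matching (up to a $(1+o(1))^n$ factor) the upper bound of Lemma \ref{yessup}. Recall $I_n=\Bbb E\bigl[\prod_{1\le i\neq j\le n}(1-Y_jZ_i)\bigr]$, with the $Y_j$ i.i.d. $[0,1]$-Uniforms, the $Z_i$ i.i.d. with density $\psi_{k_1}$, and all variables independent; by the symmetry of the super-stable probability in $(k_1,k_2)$ we may assume $k_1=k\ge 2$ and $k_2=1$. The integrand is close to $1$ only when all the $Y_j$ and all the $Z_i$ are small, but $\psi_{k_1}$ pushes the $Z_i$ toward $1$. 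I would therefore discard most of the sample space and keep only the event $E:=\{\max_j Y_j\le\beta\}\cap\{\min_i Z_i\ge 1-e^{-1}\}$ for a suitably chosen $\beta\in(0,1)$, using $I_n\ge I_n^{*}:=\Bbb E\bigl[\Bbb I(E)\prod_{i\neq j}(1-Y_jZ_i)\bigr]$. The floor $Z_i\ge 1-e^{-1}$ is forced on us because that is exactly the interval on which $\psi_{k_1}$ is log-convex, a fact used below; it costs only the fixed exponential factor coming from the volume $e^{-n}$ of $[1-e^{-1},1]^n$ and from $\psi_{k_1}(1-e^{-1})=1/(k-1)!$.

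Next I would linearize the product on $E$. Given $\alpha>1$, let $\hat\alpha\in(0,1)$ be its conjugate, defined by $\hat\alpha e^{-\hat\alpha}=\alpha e^{-\alpha}$, and put $\beta:=1-\hat\alpha/\alpha$. An elementary computation (as in the displayed parenthetical remark preceding the lemma statement) shows $(1-y)e^{\alpha y}\ge 1$ for $y\le\beta$, hence $1-yz\ge e^{-\alpha yz}$ for all $z\in[0,1]$. On $E$ this yields
\[
\prod_{i\neq j}(1-Y_jZ_i)\ \ge\ \exp\Bigl(-\alpha\sum_{j\in[n]}Y_j s_j\Bigr),\qquad s_j:=\sum_{i\neq j}Z_i,
\]
and integrating each $Y_j$ over $[0,\beta]$, then using $s_j\le s:=\sum_{i\in[n]}Z_i$, turns the $Y$-expectation into $\prod_j\frac{1-e^{-\alpha\beta s_j}}{\alpha s_j}\ge\bigl(\frac{1-e^{-\alpha\beta s}}{\alpha s}\bigr)^{n}$.

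It then remains to integrate over $\mathbf{z}\in[1-e^{-1},1]^n$. Log-convexity of $\psi_{k_1}$ on this cube together with Jensen's inequality gives $\prod_i\psi_{k_1}(z_i)\ge\psi_{k_1}(s/n)^{n}$, and the monotonicity of $\psi_{k_1}$ with $s/n\ge 1-e^{-1}$ gives $\psi_{k_1}(s/n)\ge\psi_{k_1}(1-e^{-1})=1/(k-1)!$. Since $s\in[n(1-e^{-1}),n]$ we also have $\frac{1-e^{-\alpha\beta s}}{\alpha s}\ge\frac{1-e^{-\alpha\beta n(1-e^{-1})}}{\alpha n}=(1-o(1))(\alpha n)^{-1}$, while the volume of $[1-e^{-1},1]^n$ equals $e^{-n}$. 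Combining, $I_n^{*}\ge(1-o(1))\bigl(\alpha e\,(k-1)!\,n\bigr)^{-n}$, and multiplying by $n!\ge(n/e)^n$ gives $\Bbb E[S_{n,\text{sup}}]=n!\,I_n\ge(1-o(1))\bigl(\alpha e^{2}(k-1)!\bigr)^{-n}$. As $\alpha>1$ is arbitrary (take $n\ge n(\alpha)$, then let $\alpha\downarrow 1$), we obtain $\Bbb E[S_{n,\text{sup}}]\ge(r_k-o(1))^n$ with $r_k=(e^{2}(k-1)!)^{-1}=(e^{2}(k_1-1)!)^{-1}$.

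The only genuinely delicate point is the tension built into the choice of $E$: we must keep $Z_i$ away from $0$ for log-convexity, which forces $Z_i\approx 1$ and hence forces the $Y_j$ to be extremely small for the product to survive; yet this squeeze turns out to be tight, provided the parameters $(\alpha,\hat\alpha,\beta)$ are coupled exactly as above, so that the limit $\alpha\downarrow 1$ recovers the constant $r_k$ rather than a strictly smaller one. Everything else, namely the exponential inequality, the Jensen step, and the elementary one-dimensional integral estimates, is routine, and the resulting rate agrees up to the $(1+o(1))^n$ factor with the upper bound of Lemma \ref{yessup}.
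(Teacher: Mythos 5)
Your proposal is correct and follows essentially the same route as the paper's own proof: the same restriction to the event $\{\max_j Y_j\le\beta,\ \min_i Z_i\ge 1-e^{-1}\}$ with the same coupling $\hat\alpha e^{-\hat\alpha}=\alpha e^{-\alpha}$, $\beta=1-\hat\alpha/\alpha$, the same bound $1-yz\ge e^{-\alpha yz}$, the same Jensen step via log-convexity of $\psi_{k_1}$ on $[1-e^{-1},1)$, and the same limit $\alpha\downarrow 1$. The only difference is that you spell out explicitly the passage from $s_j$ to $s$ via monotonicity of $u\mapsto(1-e^{-cu})/u$, which the paper leaves implicit.
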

In summary, we have proved
\begin{theorem}\label{no/yessup} If $\min(k_1,k_2)>1$, then the fraction of problem instances with at least one super-stable matching is at most
$n^{-n\bigl[\min(k_1-1,k_2-1)-o(1)\bigr]}$.
 If $\min(k_1,k_2)=1$ and $\max(k_1,k_2)>1$ then the  fraction of problem instances with at least one super-stable matching is between $(r_k-o(1))^n$ and $(\rho_k+o(1))^n$, where $0<r_k<\rho_k<1$ and $r_k,\,\rho_k$ decrease
 as $k$ increases.
\end{theorem}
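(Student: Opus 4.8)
The statement is a synthesis of Lemmas~\ref{nosup}, \ref{yessup} and \ref{yessup!}, and the plan is to read it off from those, together with Markov's inequality and the stochastic monotonicity of $S_{n,\text{sup}}$ in $(k_1,k_2)$ recorded earlier. Since $S_{n,\text{sup}}$ takes values in $\Bbb N$, one has $\Bbb P(S_{n,\text{sup}}>0)\le\Bbb E[S_{n,\text{sup}}]$, so any upper bound on the mean is at the same time an upper bound on the fraction of instances admitting a super-stable matching.

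Thus the regime $\min(k_1,k_2)>1$ is immediate: Lemma~\ref{nosup} gives $\Bbb E[S_{n,\text{sup}}]\le n^{-n[\min(k_1-1,k_2-1)-o(1)]}$, and Markov turns this into the asserted bound on the solvable fraction. In the regime $\min(k_1,k_2)=1$, $k:=\max(k_1,k_2)>1$, Lemma~\ref{yessup} and Markov give the upper bound $(\rho_k+o(1))^n$; that $\rho_k$ is decreasing in $k$ and $\rho_2<0.83<1$ was established alongside that lemma, while $r_k=(e^2(k-1)!)^{-1}$ is manifestly positive, decreasing, and below $\rho_k$ (already $r_2=e^{-2}<\rho_2$, and the factorial makes $r_k$ fall off the faster), all consistent with the coupling argument showing $\Bbb P_{k_1,k_2}(M\text{ sup-stable})$ decreasing in $(k_1,k_2)$.

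What does not reduce to a first-moment bound is the lower estimate $(r_k-o(1))^n$ for the fraction of solvable instances: Lemma~\ref{yessup!} only furnishes $\Bbb E[S_{n,\text{sup}}]\ge(r_k-o(1))^n$, which a priori could be carried by a super-exponentially thin set of instances each bearing a huge number of super-stable matchings. To exclude this I would use the Paley--Zygmund inequality $\Bbb P(S_{n,\text{sup}}>0)\ge(\Bbb E[S_{n,\text{sup}}])^2/\Bbb E[S_{n,\text{sup}}^2]$ and aim for $\Bbb E[S_{n,\text{sup}}^2]\le(r_k+o(1))^n$. Expanding $\Bbb E[S_{n,\text{sup}}^2]=\sum_{M,M'}\Bbb P(M,M'\text{ both sup-stable})$, the diagonal sums to $\Bbb E[S_{n,\text{sup}}]$; for $M\neq M'$ one repeats the conditioning behind Lemma~\ref{P(Mstab)=}, now pinning the diagonal entries $\bold X^{(M)},\bold X^{(M')},\bold Y^{(M)},\bold Y^{(M')}$ attached to both matchings, to obtain an integral representation of $\Bbb P(M,M'\text{ both sup-stable})$ whose integrand depends on $M,M'$ only through the cycle type of $\pi:=M^{-1}M'$. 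Summing over $\pi$ and checking that the total off-diagonal mass is but an $e^{o(n)}$ factor times the diagonal — the critical $\pi$ being those with few non-fixed points — would finish the argument.

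The crux, and the step I expect to be hardest, is precisely this off-diagonal estimate: it involves integrals that genuinely mix the super-stability integrand $F_3$ of \eqref{sup-stable} across two overlapping matchings, and the behaviour near $\pi$ the identity is as delicate as the evaluation of $\Bbb E[S_n^2]$ for ordinary stable matchings in Lennon--Pittel \cite{LenPit}. Should the sharp constant $r_k$ prove elusive, the same scheme still yields $\Bbb P(S_{n,\text{sup}}>0)\ge(c_k-o(1))^n$ for some $c_k\in(0,r_k]$, which already secures the qualitative dichotomy the theorem records: merely exponentially small when $\min(k_1,k_2)=1$, against super-exponentially small when $\min(k_1,k_2)>1$.
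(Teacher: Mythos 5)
Your treatment of everything except the lower bound on the solvable fraction coincides with the paper's: Theorem~\ref{no/yessup} is indeed presented there as a bare summary of Lemmas~\ref{nosup}, \ref{yessup} and \ref{yessup!}, with $\Bbb P(S_{n,\text{sup}}>0)\le\Bbb E[S_{n,\text{sup}}]$ supplying both upper bounds, exactly as you argue; the monotonicity and the inequalities $0<r_k<\rho_k<1$ are likewise read off from the surrounding discussion.

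Where you genuinely depart from the paper is in the lower bound, and here your diagnosis is sharper than the source. You are right that Lemma~\ref{yessup!} only gives $\Bbb E[S_{n,\text{sup}}]\ge(r_k-o(1))^n$, and that this does not by itself bound $\Bbb P(S_{n,\text{sup}}>0)$ from below by anything better than $(r_k-o(1))^n/n!$, since a single instance can carry up to $n!$ super-stable matchings. The paper supplies no additional argument for this step: it passes silently from the expectation bound to the statement about the fraction of solvable instances (note that the version in the introduction, Theorem~\ref{strong/super}, is phrased only in terms of $\Bbb E[S_{n,\text{sup}}]$, which is all the lemmas actually deliver). Your proposed repair via Paley--Zygmund and a second-moment computation in the spirit of \cite{LenPit} is the natural route, but as written it is only a plan: the off-diagonal estimate $\Bbb E[S_{n,\text{sup}}^2]\le(r_k+o(1))^n$ is precisely the hard part, you acknowledge you have not carried it out, and it is not obviously true as stated (even in the classical $k_1=k_2=1$ case the second moment exceeds the square of the first by a constant factor, and here one would at minimum need $\Bbb E[S_{n,\text{sup}}^2]\le e^{o(n)}\,\Bbb E[S_{n,\text{sup}}]^2$, which is a nontrivial claim about the near-identity permutations $\pi=M^{-1}M'$). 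So your proposal establishes the two upper bounds and correctly isolates a real gap, but it does not prove the lower-bound half of the theorem as literally stated; to be fair, neither does the paper, whose defensible content for that half is the expectation bound of Lemma~\ref{yessup!}.
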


\subsection{Strongly stable matchings} First of all,
\begin{lemma}\label{min=1}If $k:=\max(k_1,k_2)>1$ and $\min(k_1,k_2)=1$.  Then  for $n$ large enough we have
$
\Bbb E[S_{n,s}]\in [(r_k-o(1))^{n},\,0.83^n].
$
\end{lemma}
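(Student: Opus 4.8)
The plan is to observe that, under these hypotheses, strong stability is the \emph{same} notion as super-stability, so that Lemma~\ref{min=1} reduces at once to the bounds already proved for $S_{n,\text{sup}}$. By the symmetry remarked after Lemma~\ref{P(Mstab)=} (the three probabilities do not depend on $M$, and interchanging the roles of $\mathcal A_1$ and $\mathcal A_2$ interchanges $k_1$ with $k_2$), we may and do assume $k_2=1$ and $k_1=k>1$.

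First I would record the identity $F_2(\bold x,\bold y)=F_3(\bold x,\bold y)$ for $k_2=1$ that was flagged in Lemma~\ref{P(Mstab)=}. Writing $y_{j,1}=:y_j$, one has $\prod_{v\in[1]}y_{j,v}=y_j$ and $1-\prod_{v\in[1]}(1-y_{j,v})=y_j$, so every bracketed factor in \eqref{s-stable} collapses to $1-y_j\bigl(1-\prod_{u\in[k_1]}(1-x_{i,u})\bigr)$, which is precisely the corresponding bracketed factor in \eqref{sup-stable} when $k_2=1$. Hence $F_2=F_3$, and \eqref{ESnstab=} gives $\Bbb E[S_{n,s}]=\Bbb E[S_{n,\text{sup}}]$. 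Conceptually this is no accident: when one side's preference list is a linear order, ``the current partner is not strictly worse than $j$'' and ``the current partner is at least as good as $j$'' are the same condition, so the blocking pairs defining strong stability and super-stability coincide.

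With this reduction, the upper bound is exactly Lemma~\ref{yessup}: $\Bbb E[S_{n,\text{sup}}]\le(\rho_k+o(1))^n$ with $\rho_k$ nonincreasing in $k$ and $\rho_2<0.83$, so for $n$ large the $o(1)$ is absorbed and $\Bbb E[S_{n,s}]\le 0.83^{\,n}$; the lower bound is exactly Lemma~\ref{yessup!}: $\Bbb E[S_{n,\text{sup}}]\ge(r_k-o(1))^n$ with $r_k=\bigl(e^2(k-1)!\bigr)^{-1}$. Combining the two yields $\Bbb E[S_{n,s}]\in\bigl[(r_k-o(1))^n,\,0.83^{\,n}\bigr]$, which is the assertion of Lemma~\ref{min=1}; equivalently, this is just the $\min(k_1,k_2)=1$ half of Theorem~\ref{no/yessup} transcribed for strong stability.

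There is essentially no genuine obstacle here: the one point that deserves explicit attention is the algebraic collapse $F_2=F_3$ at $k_2=1$ --- that is, recognizing that the problem is already solved rather than setting up and re-estimating the $F_2$-integral from scratch --- after which the statement is a direct quotation of Lemmas~\ref{yessup} and~\ref{yessup!}.
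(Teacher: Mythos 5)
Your proposal is correct and follows exactly the paper's route: the paper's proof is the one line ``the proof is immediate since $F_2(\bold x,\bold y)=F_3(\bold x,\bold y)$ if $\min(k_1,k_2)=1$,'' after which it invokes the super-stability bounds of Lemmas \ref{yessup} and \ref{yessup!}. Your explicit verification of the collapse $F_2=F_3$ at $k_2=1$ (already flagged in Lemma \ref{P(Mstab)=}) and the symmetry reduction to $k_2=1$ are just slightly more detailed write-ups of the same argument.
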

\noindent The proof is immediate since $F_2(\bold x,\bold y)=F_3(\bold x,\bold y)$ if $\min(k_1,k_2)=1$.

It remains to consider the case $\min(k_1,k_2)>1$. Let us upper-bound $\Bbb E[S_{n,s}]$. Recall (see \eqref{s-stable}) that
\begin{multline*}
F_2(\bold x,\bold y)
=\!\!\!\!\prod_{1\le i\neq j\le n}\Biggl[1-\prod_{u\in [k_1]}\!\!x_{i,u}\cdot\!\Biggl(\!1-\prod_{v\in [k_2]}(1-y_{j,v})\!\Biggr)\\
-\prod_{v\in [k_2]} \!\!y_{j,v}\cdot\!\Biggl(\!1-\!\prod_{u\in [k_1]}(1-x_{i,u})\!\Biggr)
+\prod_{u\in [k_1]}\!\!x_{i,u}\cdot \prod_{v\in [k_2]}\!\! y_{j,v}\Biggr].
\end{multline*}
We need to find a tractable upper bound for $F_2(\bold x,\bold y)$.
\begin{lemma}\label{prod-prod} If $z_{\ell}\in [0,1]$, ($\ell\in [k]$), then
\[
\prod_{\ell\in [k]}(1-z_{\ell})\le \Biggl[1-\Biggl(\prod_{\ell\in [k]} z_{\ell}\Biggr)^{\!\!1/k}\,\Biggr]^k\le 1-\Biggl(\prod_{\ell\in [k]} z_{\ell}\Biggr)^{\!\!1/k}.
\]
\end{lemma}
\begin{proof} If, given $p\in (0,1)$,
\[
\max\left\{\sum_{\ell} \log (1-z_{\ell}): z_{\ell}\in (0,1);\,\sum_{\ell\in [k]}\log z_{\ell}\ge \log p\right\}=k\log\bigl(1-p^{1/k}\bigr),
\] 
then the claim follows immediately. Since $\sum_{\ell} \log (1-z_{\ell})$ is concave and the range of $\bold z$ is convex, it suffices to produce $\la\ge 0$ such that $z^*_{\ell}\equiv p^{1/k}$
is a stationary point of the Lagrange function $\sum_{\ell} \log (1-z_{\ell}) +\la \sum_{\ell\in [k]}\log z_{\ell}$ in the open cube $(0,1)^n$. 
The needed $\la$ is $p^{1/k}/(1-p^{1/k})$.
\end{proof}
Using Lemma \ref{prod-prod}, we have 
\begin{multline*}
F_2(\bold x,\bold y)
\le \!\!\!\prod_{1\le i\neq j\le n}\Biggl[1-\prod_{u\in [k_1]}\!\!x_{i,u}\cdot\Biggl(\prod_{v\in [k_2]}\!\! y_{j,v}\Biggr)^{\!1/k_2}\\
-\prod_{v\in [k_2]}\!\! y_{j,v}\cdot\!\Biggl(\prod_{u\in [k_1]}\!\!x_{i,u}\Biggr)^{\!1/k_1}
+\prod_{u\in [k_1]}\!\!x_{i,u}\cdot \prod_{v\in [k_2]}\!\! y_{j,v}\Biggr].
\end{multline*}
The bound looks promising as it depends only on $2n$ products $P(\bold x_i)=\prod_{u\in [k_1]} x_{i,u}$, $P(\bold y_j)=\prod_{v\in [k_2]} y_{j,v}$. 
Observe that with $k:=\min(k_1,k_2)$
\begin{multline*}
P(\bold x_i) P(\bold y_j)^{1/k_2} +P(\bold x_i)^{1/k_1} P(\bold y_j)-P(\bold x_i) P(\bold y_j)\\
\ge P(\bold x_i) P(\bold y_j)^{1/k} +P(\bold x_i)^{1/k} P(\bold y_j)-P(\bold x_i) P(\bold y_j)\\
\ge 2\Bigl[P(\bold x_i) P(\bold y_j)\Bigr]^{\frac{k+1}{2k}}-P(\bold x_i) P(\bold y_j)\ge \Bigl[P(\bold x_i) P(\bold y_j)\Bigr]^{\frac{k+1}{2k}}.
\end{multline*}
Introduce independent random variables $X_1,\dots, X_n$ with density $\phi_{k_1}(z)=\frac{\log^{k_1-1}(1/z)}{(k_1-1)!}$, and $Y_1,\dots,Y_n$ with density $\phi_{k_2}(z)=\frac{\log^{k_2-1}(1/z)}{(k_2-1)!}$.
It follows that
\begin{equation}\label{X^aY^a}
\Bbb E[S_{n,s}]\le n!\,I_n,\quad I_n:=\Bbb E\Biggl[\prod_{1\le i\neq j\le n}\Bigl(1-X_i^{\a}\, Y_j^{\a}\Bigr)\Biggr],\quad \a:=\frac{k+1}{2k}.
\end{equation}
So, mimicking \eqref{supp}, we have
\begin{equation}\label{mim1}
\begin{aligned}
&I_n\le\idotsint\limits_{\boldsymbol x\in [0,1]^n}\Biggl(\prod_{j\in [n]}\int_0^1 e^{-s_jy^{\a}} \phi_{k_2}(y)\,dy\Biggr)\prod_{i\in [n]}\phi_{k_1}(x_i)\,\,d\boldsymbol x,\\
&\qquad\qquad\qquad\qquad s_j:=\sum_{i\neq j}x_i^{\a}.
\end{aligned}
\end{equation}
Let $s=\sum_{i\in [n]} x_i^{\a}$. Since
\[
\frac{d}{dz}\log\Biggl( \int_0^1e^{-zy^{\a}} \phi_{k_2}(y)\,dy\Biggr) =- \frac{\int_0^1 e^{-zy^{\a}}\, y^{\a}\,\phi_{k_2}(y)\,dy}{\int_0^1 e^{-zy^{\a}} \phi_{k_2}(y)\,dy}\in [-1,0],
\]
we obtain
\[
\int_0^1 e^{-s_jy^{\a}} \phi_{k_2}(y)\,dy\le \exp\bigl(x_j^{\a}\bigr)\int_0^1 e^{-sy^{\a}} \phi_{k_2}(y)\,dy,
\]
implying that
\begin{equation}\label{mim2}
\prod_{j\in [n]}\int_0^1 e^{-y^{\a}s_j} \phi_{k_2}(y)\,dy\le e^n \left(\int_0^1 e^{-y^{\a}s} \phi_{k_2}(y)\,dy\right)^{\!n}.
\end{equation}
Now $\int_0^1 e^{-sy^{\a}} \phi_{k}(y)\,dy$ is a slightly-disguised Laplace transform of a non-negative function, whence it is log-convex. Therefore
\begin{equation}\label{mim3}
\left(\int_0^1 e^{-sy^{\a}} \phi_{k}(y)\,dy\right)^{\!n}\le \prod_{i\in [n]}\int_0^1e^{-y^{\a}nx_i^{\a}}\phi_k(y)\,dy.
\end{equation}
Combining \eqref{mim1}-\eqref{mim2}, we obtain
\begin{equation}\label{I_n<int}
\begin{aligned}
I_n&\le e^n\left(\int_0^1\int_0^1e^{-n(xy)^{\a}}\phi_{k_1}(x)\phi_{k_2}(y)\,dxdy\right)^n\\
&=e^n\left(\int_0^1e^{-nz^{\a}}\phi_{k_1+k_2}(z)\,dz\right)^n,\quad \a=\frac{k+1}{2k}.
\end{aligned}
\end{equation}
Indeed, $\phi_{k_1}(x)\phi_{k_2}(y)$ is the joint density of $\mathcal X,\,\mathcal Y$, the product of $k$ independent Uniforms and the product of another $k_2$ independent Uniforms, respectively. So the double integral is $\Bbb E[e^{-n(\mathcal X\mathcal Y)^{\a}}]=\Bbb E[e^{-nZ^{\a}}]$, where $Z$ is the product of $k_1+k_2$ independent Uniforms.
Substituting $nz^{\a}=\eta$, we obtain that the integral is asymptotic to
\[
n^{-1/a}\frac{(\a^{-1}\log n)^{k_1+k_2-1}}{(k_1+k_2-1)!}\cdot \frac{1}{\a}\int_0^{\infty}e^{-\eta}\,\eta^{1/\a-1}\,d\eta= c_1 n^{-1/\a} (\log n)^{k_1+k_2-1}.
\]
Recalling that $\a=(k+1)/2k$,  it follows that
\[
\Bbb E[S_{n,s}]\le n! e^n \Bigl[c_1n^{-1/a}(\log n)^{k_1+k_2-1}\Bigr]^n\!\! =O\Bigl(n^{1/2} (\log n)^{k_1+k_2-1} c_1^n n^{-\frac{k-1}{k+1}n}\Bigr).
\]
We have proved
\begin{lemma}\label{min>1} If $k:=\min(k_1,k_2)>1$, then
$
\Bbb E[S_{n,s}]\le n^{-n\bigl(\frac{k-1}{k+1}-o(1)\bigr)}.
$
\end{lemma}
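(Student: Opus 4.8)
The plan is to turn $\Bbb E[S_{n,s}]=n!\idotsint F_2(\bold x,\bold y)\,d\bold x\,d\bold y$ (see \eqref{ESnstab=} and \eqref{s-stable}) into an expectation that depends on the data only through the $2n$ products $P(\bold x_i):=\prod_{u\in[k_1]}x_{i,u}$ and $P(\bold y_j):=\prod_{v\in[k_2]}y_{j,v}$, and then to estimate that expectation. First I would apply Lemma \ref{prod-prod} to each inner factor, $\prod_{v}(1-y_{j,v})\le 1-P(\bold y_j)^{1/k_2}$ and $\prod_{u}(1-x_{i,u})\le 1-P(\bold x_i)^{1/k_1}$, which already collapses $F_2$ to a product of brackets in the $P$'s. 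Setting $k:=\min(k_1,k_2)$ and $\alpha:=(k+1)/(2k)\in(1/2,1)$, I would then bound the subtracted quantity in each bracket from below: since $P(\bold x_i),P(\bold y_j)\in[0,1]$, we get $P(\bold x_i)P(\bold y_j)^{1/k_2}+P(\bold x_i)^{1/k_1}P(\bold y_j)\ge P(\bold x_i)P(\bold y_j)^{1/k}+P(\bold x_i)^{1/k}P(\bold y_j)\ge 2[P(\bold x_i)P(\bold y_j)]^{\alpha}$ by AM--GM, and subtracting $P(\bold x_i)P(\bold y_j)\le[P(\bold x_i)P(\bold y_j)]^{\alpha}$ leaves each bracket $\le 1-[P(\bold x_i)P(\bold y_j)]^{\alpha}$. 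Passing to independent variables $X_i$ of density $\phi_{k_1}$ and $Y_j$ of density $\phi_{k_2}$ (products of $k_1$, resp.\ $k_2$, independent $[0,1]$-uniforms, so $\phi_m(z)=\log^{m-1}(1/z)/(m-1)!$), this gives $\Bbb E[S_{n,s}]\le n!\,I_n$ with $I_n=\Bbb E\bigl[\prod_{i\ne j}(1-X_i^{\alpha}Y_j^{\alpha})\bigr]$.

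The core step is bounding $I_n$, and here the obstacle is that $\phi_m$ has no usable closed form for its $n$-fold convolution, so one cannot integrate over $\bold x$ directly as in the uniform case. I would get around this with two convexity observations. Integrating out $\bold y$ via $1-t\le e^{-t}$ reduces $I_n$ to an integral of $\prod_j\int_0^1 e^{-s_jy^{\alpha}}\phi_{k_2}(y)\,dy$ over $\bold x$, with $s_j=\sum_{i\ne j}x_i^{\alpha}$. Since $\frac{d}{dz}\log\int_0^1 e^{-zy^{\alpha}}\phi_{k_2}(y)\,dy$ is $-$(a weighted average of $y^{\alpha}\in[0,1]$), hence lies in $[-1,0]$, replacing each $s_j$ by $s:=\sum_i x_i^{\alpha}$ costs only $e^{x_j^{\alpha}}\le e$ per index, i.e.\ $e^n$ overall: $I_n\le e^n\,\Bbb E\bigl[(\int_0^1 e^{-sy^{\alpha}}\phi_{k_2}(y)\,dy)^n\bigr]$. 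Next, after the substitution $u=y^{\alpha}$ the inner integral is the Laplace transform of a nonnegative measure, hence log-convex in $s$, so $(\int_0^1 e^{-sy^{\alpha}}\phi_{k_2}(y)\,dy)^n\le\prod_i\int_0^1 e^{-nx_i^{\alpha}y^{\alpha}}\phi_{k_2}(y)\,dy$ by Jensen applied to the convex function $\log$ of that integral. Integrating against $\prod_i\phi_{k_1}(x_i)$ now factorises everything, and recognising $\phi_{k_1}\phi_{k_2}$ as the joint density of $(\mathcal X,\mathcal Y)$ with $\mathcal X\mathcal Y$ a product of $k_1+k_2$ uniforms, I obtain $I_n\le e^n\bigl(\int_0^1 e^{-nz^{\alpha}}\phi_{k_1+k_2}(z)\,dz\bigr)^n$.

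Finally I would evaluate the one-dimensional integral asymptotically: the substitution $\eta=nz^{\alpha}$, together with $\log(1/z)\sim\alpha^{-1}\log n$ on the bulk, gives $\int_0^1 e^{-nz^{\alpha}}\phi_{k_1+k_2}(z)\,dz=(1+o(1))\,c_1\,n^{-1/\alpha}(\log n)^{k_1+k_2-1}$ for an explicit constant $c_1$ (built from $1/(k_1+k_2-1)!$, a power of $\alpha^{-1}$, and $\Gamma(1/\alpha)$). Combining with Stirling's formula and $1/\alpha=2k/(k+1)$, so that $n^{n}\cdot n^{-n/\alpha}=n^{-\frac{k-1}{k+1}n}$, yields $\Bbb E[S_{n,s}]\le n!\,e^n\bigl[c_1 n^{-1/\alpha}(\log n)^{k_1+k_2-1}\bigr]^n=O\!\bigl(n^{1/2}(c_1 e)^n(\log n)^{(k_1+k_2-1)n}\,n^{-\frac{k-1}{k+1}n}\bigr)$; since the constant and the $(\log n)^{(k_1+k_2-1)n}=n^{o(n)}$ factors are absorbed into the $o(1)$ in the exponent, this is $n^{-n(\frac{k-1}{k+1}-o(1))}$, as claimed. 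The single hardest point is the log-convexity trick in the second paragraph: it is what makes the $\bold x$-integration separable despite the lack of a convolution bound for $\phi_m$, and without it one is left with the coupled variables $s_j$.
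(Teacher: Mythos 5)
Your proposal is correct and follows essentially the same route as the paper: Lemma \ref{prod-prod} to collapse $F_2$ to the products $P(\bold x_i),P(\bold y_j)$, the AM--GM reduction to $1-[P(\bold x_i)P(\bold y_j)]^{\alpha}$ with $\alpha=(k+1)/(2k)$, the $e^n$-cost decoupling of $s_j$ into $s$ via the bounded logarithmic derivative, the log-convexity/Jensen step to factorise the $\bold x$-integration, the identification of $\phi_{k_1}\phi_{k_2}$ with the density $\phi_{k_1+k_2}$ of a product of $k_1+k_2$ uniforms, and the substitution $\eta=nz^{\alpha}$ giving $n^{-1/\alpha}(\log n)^{k_1+k_2-1}$ up to constants. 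No substantive differences from the paper's argument.
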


In summary, we have
\begin{theorem}\label{strong} If $k:=\min(k_1,k_2)>1$, then the fraction of problem instances with at least one strongly stable matching is 
at most $n^{-n\bigl[\frac{k-1}{k+1}-o(1)\bigr]}$. If $\min(k_1,k_2)=1$ and $k_{\text{max}}:=\max(k_1,k_2)>1$ then the fraction of problem instances with at least one strongly stable matching is at most $0.83^n$, and at least 
$[e^2(k_{\text{max}}-1)!]^{-n}$.
\end{theorem}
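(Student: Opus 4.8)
\ The plan is to assemble the two expectation estimates already in hand, Lemma~\ref{min>1} and Lemma~\ref{min=1}, and read them off as statements about the fraction of solvable instances via Markov's inequality. Since $S_{n,s}$ takes values in $\{0,1,\dots,n!\}$, one has $\mathbb{P}(S_{n,s}\ge 1)\le\mathbb{E}[S_{n,s}]$; so in the regime $k:=\min(k_1,k_2)>1$, Lemma~\ref{min>1} immediately yields $\mathbb{P}(S_{n,s}\ge1)\le n^{-n(\frac{k-1}{k+1}-o(1))}$, the first assertion. In the regime $\min(k_1,k_2)=1$, $k:=\max(k_1,k_2)>1$, the upper half of Lemma~\ref{min=1} gives $\mathbb{E}[S_{n,s}]\le(\rho_k+o(1))^n\le 0.83^n$ for $n$ large (as $\rho_k\le\rho_2<0.83$), the claimed upper bound; while the lower half of Lemma~\ref{min=1} gives $\mathbb{E}[S_{n,s}]\ge(r_k-o(1))^n$ with $r_k=[e^2(k-1)!]^{-1}$, the ``at least'' clause. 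Here I would note that $F_2=F_3$ when $\min(k_1,k_2)=1$ (Lemma~\ref{P(Mstab)=}), so $\mathbb{E}[S_{n,s}]=\mathbb{E}[S_{n,\text{sup}}]$ and the lower bound can be quoted straight from the construction in the proof of Lemma~\ref{yessup!}; this recovers Theorem~\ref{strong/super}(ii).

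For context I would also recall where the real effort sits, inside Lemma~\ref{min>1}, and sketch its chain: (1) bound $F_2$ above by Lemma~\ref{prod-prod}, $\prod_{\ell\in[k]}(1-z_\ell)\le 1-\bigl(\prod_{\ell\in[k]}z_\ell\bigr)^{1/k}$, so that the mixed terms $1-\prod_u(1-x_{i,u})$, $1-\prod_v(1-y_{j,v})$ in \eqref{s-stable} become powers of the scalars $P(\mathbf{x}_i)=\prod_u x_{i,u}$ and $P(\mathbf{y}_j)=\prod_v y_{j,v}$; (2) use $2\,t^{(k+1)/2k}-t\ge t^{(k+1)/2k}$ for $t=P(\mathbf{x}_i)P(\mathbf{y}_j)\in[0,1]$ to arrive at \eqref{X^aY^a}, namely $\mathbb{E}[S_{n,s}]\le n!\,\mathbb{E}\bigl[\prod_{i\ne j}(1-X_i^\alpha Y_j^\alpha)\bigr]$ with $\alpha=(k+1)/(2k)$; (3) peel off one $j$-factor at a time, using $\frac{d}{dz}\log\!\int_0^1 e^{-zy^\alpha}\phi_{k_2}(y)\,dy\in[-1,0]$, to replace each $s_j=\sum_{i\ne j}x_i^\alpha$ by the full sum $s=\sum_i x_i^\alpha$ at cost $e^n$; (4) use log-convexity of the Laplace-type transform $\int_0^1 e^{-sy^\alpha}\phi_{k_2}(y)\,dy$ to break an $n$th power into a product of $n$ one-variable integrals, giving \eqref{I_n<int}; (5) evaluate $\int_0^1 e^{-nz^\alpha}\phi_{k_1+k_2}(z)\,dz\sim c_1\,n^{-1/\alpha}(\log n)^{k_1+k_2-1}$ (writing the integrand through a product of $k_1+k_2$ independent Uniforms and substituting $\eta=nz^\alpha$) and multiply by $n!$, so that the $n^n$ from $n!\,e^n$ is beaten by $n^{-n/\alpha}$ with $1-1/\alpha=-(k-1)/(k+1)$. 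The $\min=1$ case needs none of this, since $F_2=F_3$ and one just invokes Lemmas~\ref{yessup} and~\ref{yessup!}.

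Given the two lemmas, Theorem~\ref{strong} itself is essentially a one-liner, so the hard part is not here but inside Lemma~\ref{min>1}, and within it in steps (1)--(2): the obstacle is to produce an upper bound for the three-term product $F_2$ that depends on the configuration only through the $2n$ one-dimensional statistics $P(\mathbf{x}_i),P(\mathbf{y}_j)$, so that the log-convexity decoupling of step~(4) becomes applicable. Lemma~\ref{prod-prod} (a short Lagrange computation, multiplier $\lambda=p^{1/k}/(1-p^{1/k})$) together with the arithmetic-mean step is exactly what makes this possible, and the exponent $\frac{k-1}{k+1}$ in the final bound is merely the trace of $1/\alpha=2k/(k+1)$ in $n^{-1/\alpha}$. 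I would flag one point of phrasing: the ``at least'' clause is genuinely a lower bound on $\mathbb{E}[S_{n,s}]$ rather than, literally, on $\mathbb{P}(S_{n,s}\ge1)$ --- consistent with Theorem~\ref{strong/super}(ii) --- and promoting it to a lower bound on the fraction of solvable instances at the same exponential rate would require a second-moment estimate for $S_{n,s}$, which is neither carried out nor needed for any of the stated conclusions.
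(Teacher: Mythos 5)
Your proposal is correct and follows the paper's own route exactly: Theorem~\ref{strong} is obtained by combining Lemma~\ref{min>1} (whose proof via Lemma~\ref{prod-prod}, the bound \eqref{X^aY^a} with $\a=(k+1)/2k$, and the log-convexity decoupling you sketch is precisely the paper's argument) with Lemma~\ref{min=1} and Markov's inequality. Your closing remark is also well taken --- the ``at least'' clause is proved in the paper only as a lower bound on $\Bbb E[S_{n,s}]$, not literally on $\Bbb P(S_{n,s}\ge 1)$, a gap the paper itself leaves unaddressed.
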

{\bf Note.\/} For $k=\min(k_1,k_2)>1$ the fractions of solvable problem instances are super-exponentially small for both super-stable solutions and strongly
stable solutions. The difference is that for the former this fraction is around $n^{-n(k-1)}$, while for the latter the still
minuscule fraction is much larger, around
$n^{-n\frac{k-1}{k+1}}$.

\subsection{Weakly stable matchings.}
According to \eqref{ESnstab=} and \eqref{w-stable}, we have
\begin{equation}\label{start}
\begin{aligned}
&\,\,\Bbb E[S_{n,w}]=n! \idotsint\limits_{\bold x\in [0,1]^{nk_1},\, \bold y\in [0,1]^{nk_2}}\!\!\!\ \!\!\!\!\!F_1(\bold x,\bold y)\,d\bold x\,d\bold y,\\
&F_1(\bold x,\bold y):=\prod_{1\le i\neq j\le n}\!\!\Biggl(1-\prod_{u\in [k_1]}\!\! x_{i,u}\cdot \prod_{v\in [k_2]}\!\! y_{j,v}\Biggr),
\end{aligned}
\end{equation}
with $S_{n,w}$ denoting the total number of weakly stable matchings. This time $\Bbb E[S_{n,w}]\ge \Bbb E[S_n]$, which is the expected
number of stable matchings with random totally ordered preference lists, and it is asymptotic to $e^{-1}n\log n$.  
$\Bbb E[S_{n,w}]$ is expected to grow faster with $n$, but how much faster?  

{\bf (i)\/} Upper bound. Instead of the ``hard-won'' inequality \eqref{X^aY^a}, now, according to \eqref{start}-\eqref{mim1}, we have the analogous {\it equality\/} from the start:
\begin{equation}\label{XY}
\begin{aligned}
&\quad\,\,\Bbb E[S_{n,w}]= n!\,I_n,\quad I_n:=\Bbb E\Biggl[\prod_{1\le i\neq j\le n}\Bigl(1-X_i\, Y_j\Bigr)\Biggr],\\
&I_n\le\idotsint\limits_{\boldsymbol x\in [0,1]^n}\Biggl(\prod_{j\in [n]}\int_0^1 e^{-s_jy} \phi_{k_2}(y)\,dy\Biggr)\prod_{i\in [n]}\phi_{k_1}(x_i)\,\,d\boldsymbol x,\\
&\qquad\qquad\quad\qquad\,\, s_j:=\sum_{i\neq j}x_i.
\end{aligned}
\end{equation}
Following the steps that led us to \eqref{I_n<int}, we obtain
\begin{align*}
&\qquad\qquad\qquad I_n\le e^n K_n^n,\\
K_n&:= \int_0^1e^{-nz}\phi_{k_1+k_2}(z)\,dz\sim n^{-1} \frac{(\log n)^{k_1+k_2-1}}{(k_1+k_2-1)!}.
\end{align*}
We conclude that
\begin{equation}\label{wupper}
\Bbb E[S_{n,w}]\le (\log n)^{(k_1+k_2-1+o(1))n}.
\end{equation}

{\bf (ii)\/} Lower bound. Let $k_1\ge k_2$. We start with
\[
I_n\ge \Bbb E\Biggl[\Bbb I\bigl(\max_iX_i\le e^{-1}),\,\max_j Y_j\le\be\bigr)\prod_{1\le i\neq j\le n}\Bigl(1-X_i\, Y_j\Bigr)\Biggr],
\]
the constraint on $\max_i X_i$ being dictated by log-convexity of $\phi_{k_1}(x)$ for $x\in (0,e^{-1}]$. As for the constraint on $\max_jY_j$, we need it to have the bound
\[
1-Y_j X_i\ge e^{-\a X_iY_j},\, \qquad \a>1,\, \,\be:= 1-\hat\a/\a.
\]
So, with $s:=\sum_{i\in [n]} x_i$,
\[
I_n\ge\idotsint\limits_{\boldsymbol x\in [0,e^{-1}]^n}\Biggl(\int_0^{\be} e^{-\a sy} \phi_{k_2}(y)\,dy\Biggr)^n\bigl(\phi_{k_1}(s/n)\bigr)^n\,\,d\boldsymbol x.
\]
Since the density of the sum of $n$ independent Uniforms is $s^{n-1}/(n-1)!$ if $s\le 1$, the last bound yields
\begin{align*}
I_n&\ge \Biggl(\int_0^{\be} e^{-\a y} \phi_{k_2}(y)\,dy\Biggr)^n \left(\frac{\log^{k_1-1}n}{(k_1-1)!}\right)^n\idotsint\limits_{\boldsymbol x\in [0,e^{-1}]^n\atop s\le 1} d\bold x\\
&\ge\ga^n (\log n)^{(k_1-1)n}\int_0^{e^{-1}}\!\!\!\frac{s^{n-1}}{(n-1)!}\,ds=\ga_1^n\, \frac{(\log n)^{(k_1-1)n}}{n!}.
\end{align*}
Therefore
\begin{equation}\label{wlower}
\Bbb E[S_{n,w}] =n!\,I_n\ge \ga_2^n\, (\log n)^{(k_1-1)n}.
\end{equation} 
Combining \eqref{wupper} and \eqref{wlower} we have proved
\begin{theorem}\label{weak} If $\max(k_1,k_2)>1$ then
\[
(\log n)^{n\bigl(\max(k_2,k_1)-1-o(1)\bigr)}\le\Bbb E[S_{n,w}]\le (\log n)^{(k_1+k_2-1+o(1))n},
\]
i.e. $\Bbb E[S_{n,w}]$ grows super-exponentially fast.
\end{theorem}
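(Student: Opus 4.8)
The theorem is a summary of two estimates, so the plan is to prove the upper bound $\Bbb E[S_{n,w}]\le(\log n)^{(k_1+k_2-1+o(1))n}$ and the lower bound $\Bbb E[S_{n,w}]\ge(\log n)^{(\max(k_1,k_2)-1-o(1))n}$ separately, both starting from the identity $\Bbb E[S_{n,w}]=n!\,I_n$ with $I_n=\Bbb E\bigl[\prod_{1\le i\neq j\le n}(1-X_iY_j)\bigr]$, the $X_i$ having density $\phi_{k_1}$ and the $Y_j$ density $\phi_{k_2}$, where $\phi_k(z)=\log^{k-1}(1/z)/(k-1)!$ is the density of a product of $k$ independent $[0,1]$-Uniforms. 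This identity is immediate from \eqref{ESnstab=}--\eqref{w-stable} since $F_1(\bold x,\bold y)$ depends on $\bold x,\bold y$ only through the $2n$ products $\prod_u x_{i,u}$, $\prod_v y_{j,v}$, which are independent with those densities. Everything then reduces to sandwiching $I_n$ and multiplying by $n!\sim\sqrt{2\pi n}\,(n/e)^n$, the key cancellation being $n^n$ from $n!$ against $n^{-n}$ from $K_n^n$ below.

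For the upper bound I would apply $1-X_iY_j\le e^{-X_iY_j}$, integrate out the $Y_j$ one at a time, replace each partial sum $s_j=\sum_{i\neq j}x_i$ by $s=\sum_i x_i$ using that the logarithmic derivative of $z\mapsto\int_0^1e^{-zy}\phi_{k_2}(y)\,dy$ lies in $[-1,0]$ (total extra factor $e^s\le e^n$), and then use that this same function, being a Laplace transform, is log-convex, to reverse Jensen: $\bigl(\int_0^1e^{-sy}\phi_{k_2}(y)\,dy\bigr)^n\le\prod_i\int_0^1e^{-nx_iy}\phi_{k_2}(y)\,dy$. The resulting $\boldsymbol x$-integral factorizes into $n$ copies of $\int_0^1\!\int_0^1e^{-nxy}\phi_{k_1}(x)\phi_{k_2}(y)\,dx\,dy=\int_0^1e^{-nz}\phi_{k_1+k_2}(z)\,dz=:K_n$, because a product of $k_1$ Uniforms times a product of $k_2$ Uniforms is a product of $k_1+k_2$ Uniforms. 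The substitution $nz=\eta$ gives $K_n\sim n^{-1}(\log n)^{k_1+k_2-1}/(k_1+k_2-1)!$, so $\Bbb E[S_{n,w}]=n!\,I_n\le n!\,e^nK_n^n=(\log n)^{(k_1+k_2-1+o(1))n}$, with $e^n$ and the polynomial factors absorbed into the exponent since $(\log n)^{o(n)}$ dominates $e^{O(n)}$.

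For the lower bound, take $k_1\ge k_2$ without loss of generality and restrict the expectation to $\{\max_iX_i\le e^{-1},\ \max_jY_j\le\be\}$. The first constraint places every $x_i$ where $\phi_{k_1}$ is log-\emph{convex}, so ordinary Jensen now gives $\prod_i\phi_{k_1}(x_i)\ge\phi_{k_1}(s/n)^n$; the second, with $\be=1-\hat\a/\a$ for an arbitrary $\a>1$ and $\hat\a\in(0,1)$ the other root of $we^{-w}=\a e^{-\a}$, secures the elementary inequality $1-X_iY_j\ge e^{-\a X_iY_j}$. Integrating out the $Y_j$, then restricting further to $s\le1$ (which forces all $x_i\le e^{-1}$ for free), using $\phi_{k_1}(s/n)\ge\log^{k_1-1}(n)/(k_1-1)!$ and $\int_0^{\be}e^{-\a sy}\phi_{k_2}(y)\,dy\ge\int_0^{\be}e^{-\a y}\phi_{k_2}(y)\,dy=:\gamma>0$, and noting that $\sum_i x_i$ has density $s^{n-1}/(n-1)!$ on $[0,1]$, yields $I_n\ge\gamma^n(k_1-1)!^{-n}(\log n)^{(k_1-1)n}\,e^{-n}/n!$, hence $\Bbb E[S_{n,w}]=n!\,I_n\ge\gamma_1^n(\log n)^{(k_1-1)n}=(\log n)^{(k_1-1-o(1))n}$.

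The main obstacle, and the thing to be careful about, is keeping the two log-convexity arguments apart and checking that the truncations are harmless: the upper bound uses log-convexity of the \emph{Laplace transform} $\int_0^1e^{-zy}\phi_{k_2}(y)\,dy$ (to break an $n$-th power into a product), while the lower bound uses log-convexity of the \emph{density} $\phi_{k_1}$, which holds only on $(0,e^{-1}]$ and thereby forces the truncation $\max_iX_i\le e^{-1}$; one must verify that this truncation, the extra constraint $s\le1$, and the constants $\gamma,\a$ only affect $e^{O(n)}$ and $n!$-type factors and so leave the $\log n$-exponent untouched. Everything else---the calculus inequalities for the logarithmic derivatives, the bound $1-t\ge e^{-\a t}$, and the Laplace asymptotics of $K_n$---is routine.
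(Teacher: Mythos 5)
Your proposal reproduces the paper's own argument essentially verbatim: the same identity $\Bbb E[S_{n,w}]=n!\,I_n$ with $X_i$ of density $\phi_{k_1}$ and $Y_j$ of density $\phi_{k_2}$, the same upper bound via $1-XY\le e^{-XY}$, the logarithmic-derivative-in-$[-1,0]$ replacement of $s_j$ by $s$, log-convexity of the Laplace transform, and the asymptotics of $K_n=\int_0^1e^{-nz}\phi_{k_1+k_2}(z)\,dz$, and the same lower bound via the truncations $\max_iX_i\le e^{-1}$ (log-convexity of the density), $\max_jY_j\le\be$ with $1-t\ge e^{-\a t}$, and the simplex-volume computation. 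The only slip is the parenthetical claim that $s\le 1$ forces all $x_i\le e^{-1}$; what you actually use (and what your final factor $e^{-n}/n!$ reflects) is the restriction $s\le e^{-1}$, exactly as in the paper, so the argument is unaffected.
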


\end{document}